\date{}
\def\BState{\State\hskip-\ALG@thistlm}
\newlist{casess}{enumerate}{1}
\setlist[casess]{label=     \textbf{Case} \arabic*:}
\DeclarePairedDelimiter\floor{\lfloor}{\rfloor}
\newcommand*{\rom}[1]{\expandafter\@slowromancap\romannumeral #1@}
\patchcmd{\ttlh@hang}{\parindent\z@}{\parindent\z@\leavevmode}{}{}
\patchcmd{\ttlh@hang}{\noindent}{}{}{}
\definecolor{mygreen}{RGB}{28,172,0} 
\definecolor{mylilas}{RGB}{170,55,241}
\newlist{Assumptions}{enumerate}{1}
\setlist[Assumptions]{label=     \textbf{Assumption} \arabic*:}
\newsavebox{\@brx}
\newcommand{\llangle}[1][]{\savebox{\@brx}{\(\m@th{#1\langle}\)}%
  \mathopen{\copy\@brx\kern-0.5\wd\@brx\usebox{\@brx}}}
\newcommand{\rrangle}[1][]{\savebox{\@brx}{\(\m@th{#1\rangle}\)}%
  \mathclose{\copy\@brx\kern-0.5\wd\@brx\usebox{\@brx}}}
\titleformat{\subsection}[runin]
       {\normalfont\bfseries}
       {\thesubsection}
       {0.5em}
       {}
       [.]
\newcommand{\A}{\mathfrak{A}}
\newcommand{\B}{\mathfrak{B}} 
\newcommand{\CC}{\mathbb{C}}
\def\N{\mathbb{N}}
\def\T{\mathbb{T}}
\def\Z{\mathbb{Z}}
\def\E{{\mathscr E}}
\def\Z{\mathbb Z}
\def\e{{\sf e}}
\def\BofH{\mathbb B(\mathcal H)}
\def\bu{\bullet}
\def\({\left(}
\def\[{\left[}
\def\){\right)}
\def\]{\right]}
\def\<{\langle}
\def\>{\rangle}
\providecommand{\norm}[1]{\lVert#1\rVert}
 \newtheorem{thm}{Theorem}[section]
 \newtheorem{cor}[thm]{Corollary}
 \newtheorem{lem}[thm]{Lemma}
 \newtheorem{prop}[thm]{Proposition}
 \theoremstyle{definition}
 \newtheorem{defn}[thm]{Definition}
 \theoremstyle{remark}
 \newtheorem{ex}[thm]{Example}
 \newtheorem{que}[thm]{Question}
 \numberwithin{equation}{section}
\numberwithin{equation}{section}
\begin{document}


\title{On the topological ranks of Banach $^*$-algebras associated with groups of subexponential growth}

\author{Felipe I. Flores
\footnote{
\textbf{2020 Mathematics Subject Classification:} Primary 46K05, Secondary 43A20, 43A15.
\newline
\textbf{Key Words:} topological stable rank, real rank, Banach $^*$-algebra, symmetrized $L^p$-crossed product, subexponential growth, Fell bundle.
}
}

\maketitle


\begin{abstract}
Let $G$ be a group of subexponential growth and $\mathscr C\overset{q}{\to}G$ a Fell bundle. We show that any Banach $^*$-algebra that sits between the associated $\ell^1$-algebra $\ell^1( G\,\vert\,\mathscr C)$ and its $C^*$-envelope has the same topological stable rank and real rank as $\ell^1( G\,\vert\,\mathscr C)$. We apply this result to compute the topological stable rank and real rank of various classes of symmetrized twisted $L^p$-crossed products and show that some twisted $L^p$-crossed products have topological stable rank 1. Our results are new even in the case of (untwisted) group algebras.
\end{abstract}

\tableofcontents

\section{Introduction}\label{introduction}

The (left) topological stable rank of a unital Banach algebra $\B$ was introduced by Rieffel \cite{Ri83} as the smallest $n\in\Z^+$ such that $$
{\rm Lg}_n(\B) = \left\{ (b_1,\ldots,b_n)\in \B^n \mid \B=\B b_1+\ldots +\B b_n\right\}
$$ 
is dense in $\B^n$. The idea was to produce a noncommutative notion of dimension similar to both the covering dimension for compact spaces and the Bass stable rank. This idea was then applied very successfully to the study of K-theory for $C^*$-algebras \cite{Ri83b} and motivated the notion of real rank. Indeed, the real rank of a unital $C^*$-algebra $\B$, as introduced by Brown and Pedersen \cite{BP91}, is the smallest $n\in\N$ such that 
$$
{\rm Lg}_{n+1}(\B)_{\rm sa} = \left\{ (b_0,\ldots,b_n)\in \B_{\rm sa}^{n+1} \mid \B=\B b_0+\ldots +\B b_n\right\}
$$ 
is dense in $\B_{\rm sa}^{n+1}$.

Studying these noncommutative notions of dimension has become a very interesting problem for $C^*$-algebraists and has motivated many works in this area. This is particularly true in the case of group $C^*$-algebras and crossed products. Just to mention some examples, the topological stable rank has been studied for group $C^*$-algebras in \cite{SuTa95,DyHaRo97,GO20,Ra24}, whereas the real rank of group $C^*$-algebras has been studied in \cite{Ka93,ArKa12,Sc19,Mo23}. In the setting of crossed products, we can also find the works \cite{Pu90b,Pu90,CP05}.

There are also many works on topological ranks that remain outside the realm of $C^*$-algebras (see \cite{Ba98,Ba99,DaJi09}), and this will be our case as well. Indeed, we will be interested in the topological ranks of generalized convolution algebras and $L^p$-operator algebras, as introduced by N. C. Phillips \cite{CP12,CP13}.

The study of $L^p$-operator algebras has attracted much attention in the last decade (see \cite{Ga21} and the references therein), and it is currently a very active area of research \cite{BiDeWe24,ChGaTh24,El25}. Most of the articles seem to be concerned with the properties that work exactly as in the $C^*$-case \cite{El25}, while a few others deal with new (perhaps surprising) phenomena \cite{BiDeWe24,ChGaTh24}. In this sense, the present article should be placed in the first group, as we plan to demonstrate that the topological stable rank is independent of $p$ for a large class of symmetrized crossed products. Symmetrized $L^p$-crossed products are `$^*$-analogs' of regular $L^p$-crossed products and have the advantage of being Banach $^*$-algebras that naturally act on a $L^2$-space. They have been studied in \cite{CP19,SaWi20,AuOr22,el24b,El25,BaKw25}, often under the name of `algebras of pseudofunctions'.

In the case of topological stable rank 1, we can also say something about regular $L^p$-crossed products. In fact, we also aim to solve the following question of N. C. Phillips \cite[Question 8.4]{CP13}.
\begin{que}\label{theq}
    Let $\alpha$ be a minimal action of $\mathbb Z$ either on the Cantor set $X=2^\mathbb N$ or on $X=\mathbb T$ by irrational rotations. Does $F^p(\mathbb Z,X,\alpha)$ have topological stable rank 1?
\end{que}

Instead of $C^*$-dynamical systems, we chose to work with Fell bundles over groups of subexponential growth. This is in order to maximize generality, but also because the main tools we used were already developed in this framework \cite{BeCo25,Fl24}. Note that the inclusion of Fell bundles provides almost no extra complication (when the approach is compared to, say, an approach based strictly on twisted $C^*$-dynamical systems). We also hope that this generality will be useful in the future.

Now let us present the main theorem. Given a Fell bundle $\mathscr C\overset{q}{\to}G$, we let $\ell^1( G\,\vert\,{\mathscr C})$ be the associated algebra of summable cross-sections, and we write ${\rm C^*}( G\,\vert\,{\mathscr C})$ for the associated enveloping $C^*$-algebra. Naturally, $\ell^1( G\,\vert\,{\mathscr C})$ is equipped with its $1$-norm, whereas ${\rm C^*}( G\,\vert\,{\mathscr C})$ is considered with its $C^*$-norm. In what follows, we use the symbols ${\rm tsr}(\B)$ and ${\rm rr}(\B)$ to denote the topological stable rank and the real rank of the algebra $\B$.

\begin{thm}\label{mainthm}
    Let $G$ be a group of subexponential growth and $\mathscr C\overset{q}{\to}G$ a Fell bundle. Suppose that $\B$ is a Banach $^*$-algebra with a continuous involution and such that $\ell^1( G\,\vert\,{\mathscr C})\subset\B\subset {\rm C^*}( G\,\vert\,{\mathscr C})$. Then 
    $${\rm tsr}(\B)= {\rm tsr}\big(\ell^1( G\,\vert\,{\mathscr C})\big)\quad \text{ and }\quad {\rm rr}(\B)= {\rm rr}\big(\ell^1( G\,\vert\,{\mathscr C})\big).
    $$
\end{thm}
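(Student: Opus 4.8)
The plan is to sandwich the three algebras and reduce the whole statement to a single inequality carrying the subexponential-growth input. Write $\A:={\rm C^*}(G\,\vert\,\mathscr C)$ and abbreviate $\ell^1:=\ell^1(G\,\vert\,\mathscr C)$, unitizing where necessary. The soft ingredient is a monotonicity lemma: if $D\hookrightarrow E$ is a continuous, dense inclusion of unital Banach $^*$-algebras, then ${\rm Lg}_n(D)\subseteq{\rm Lg}_n(E)$, and combining the density of $D$ in $E$ with the density of ${\rm Lg}_n(D)$ in $D^n$ (for $n={\rm tsr}(D)$) and the continuity of the inclusion shows that ${\rm Lg}_n(E)$ is dense in $E^n$; hence ${\rm tsr}(E)\le{\rm tsr}(D)$, and likewise ${\rm rr}(E)\le{\rm rr}(D)$, where for the real rank one approximates self-adjoint tuples by self-adjoint tuples using the continuity of the involution. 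Since $\ell^1$ is dense in $\A$, applying this to $\ell^1\subseteq\B$ and to $\B\subseteq\A$ gives ${\rm tsr}(\A)\le{\rm tsr}(\B)\le{\rm tsr}(\ell^1)$ and ${\rm rr}(\A)\le{\rm rr}(\B)\le{\rm rr}(\ell^1)$; here I use that $\ell^1$ is dense in $\B$, which holds because the natural core of finitely supported sections is dense in each of the three algebras. It therefore suffices to prove the reverse inequalities ${\rm tsr}(\ell^1)\le{\rm tsr}(\A)$ and ${\rm rr}(\ell^1)\le{\rm rr}(\A)$.

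For these the hypothesis on $G$ enters through inverse-closedness. By the Hulanicki-type results available for subexponential growth \cite{BeCo25,Fl24}, the inclusion $\ell^1\hookrightarrow\A$ is spectrally invariant: any element of $\ell^1$ that is invertible in $\A$ is already invertible in $\ell^1$. The useful consequence is a tuple reformulation: regarding $(a_1,\dots,a_n)\in(\ell^1)^n$ as a column, it lies in ${\rm Lg}_n$ exactly when $\sum_i a_i^* a_i$ is invertible --- a standard fact in the $C^*$-algebra $\A$ --- and, since $\sum_i a_i^* a_i\in\ell^1$, spectral invariance upgrades invertibility in $\A$ to invertibility in $\ell^1$ (the left inverse $(\sum_j a_j^* a_j)^{-1}(a_1^*,\dots,a_n^*)$ then lies in $(\ell^1)^n$). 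Thus ${\rm Lg}_n(\ell^1)=\{(a_1,\dots,a_n)\in(\ell^1)^n:\ \textstyle\sum_i a_i^* a_i\text{ is invertible in }\A\}$, with the analogous description of the self-adjoint sets governing ${\rm rr}$.

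The crux is then a density statement: assuming ${\rm tsr}(\A)=n$, so that ${\rm Lg}_n(\A)$ is dense in $\A^n$, one must show ${\rm Lg}_n(\ell^1)$ is dense in $(\ell^1)^n$, and similarly for the self-adjoint tuples. By the reformulation above this means perturbing a given $(a_1,\dots,a_n)\in(\ell^1)^n$, \emph{within the $\ell^1$-norm}, so that $\sum_i a_i^* a_i$ becomes invertible in $\A$. This is where I expect the main obstacle to lie: left-invertibility is an open condition and density of left-invertible tuples is available only in the $C^*$-norm, which is strictly weaker than the $\ell^1$-norm, so spectral invariance by itself does \emph{not} transport the density --- a $C^*$-small correction need not be $\ell^1$-small. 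Overcoming this requires using subexponential growth quantitatively, beyond mere inverse-closedness. The plan is to reduce first to finitely supported sections, which are $\ell^1$-dense, and then to produce the required perturbation inside a finite corner obtained by compressing along a F\o lner exhaustion of $G$; subexponential growth supplies exhaustions with asymptotically negligible boundary and, exactly as in the spectral-radius estimate underlying the Hulanicki property, controls the $\ell^1$-norm of the corrective term by the $C^*$-size of the perturbation up to a subexponential factor. Once $\sum_i a_i^* a_i$ has been made invertible in $\A$ by an $\ell^1$-small perturbation, spectral invariance returns invertibility in $\ell^1$; the real-rank version is run verbatim on self-adjoint tuples, the continuity of the involution keeping every perturbation self-adjoint.

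Putting the pieces together, ${\rm tsr}(\ell^1)\le{\rm tsr}(\A)\le{\rm tsr}(\B)\le{\rm tsr}(\ell^1)$ and identically for ${\rm rr}$, so all three coincide, which is the assertion. The decisive step is the quantitative $\ell^1$-control in the previous paragraph: transferring $C^*$-density of left-invertible tuples into the finer $\ell^1$-topology is what genuinely needs subexponential (rather than merely amenable) growth, and it is the estimate I would expect to occupy the technical heart of the argument.
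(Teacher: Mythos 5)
Your reduction is exactly the paper's: the monotonicity lemma for dense continuous unital inclusions gives ${\rm tsr}\big({\rm C^*}(G\,\vert\,\mathscr C)\big)\le{\rm tsr}(\B)\le{\rm tsr}\big(\ell^1(G\,\vert\,\mathscr C)\big)$ (and likewise for ${\rm rr}$, using continuity of the involution to symmetrize approximants), the tuple criterion via invertibility of $\sum_i a_i^*a_i$ is Proposition \ref{useful}, and the spectral invariance of finitely supported sections is Theorem \ref{subexp}. You have also correctly located the one genuinely nontrivial step: given finitely supported $\Phi_1,\ldots,\Phi_n$, produce an $\ell^1$-small perturbation that is left-generating, when density of ${\rm Lg}_n$ is only available in the (weaker) $C^*$-norm. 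But precisely there your argument stops being a proof: ``compress along a F\o lner exhaustion \ldots controls the $\ell^1$-norm of the corrective term by the $C^*$-size of the perturbation up to a subexponential factor'' is a hoped-for estimate rather than an argument --- you give no mechanism guaranteeing that the compressed tuple remains left-invertible in the $C^*$-algebra, and no actual bound converting $C^*$-smallness of the correction into $\ell^1$-smallness --- and you explicitly flag it as the part you expect to be the technical heart.

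The paper closes this gap with a device you did not invoke: finitely supported positive definite functions $\varphi$ on the amenable group $G$ and the induced completely positive contractions $T_\varphi$ on ${\rm C^*}(G\,\vert\,\mathscr C)$ (Lemma \ref{cpmap} and the B\'edos--Conti multipliers). One chooses $\varphi$ with $\norm{\Phi_i-T_\varphi\Phi_i}_1<\varepsilon/2$, then a $C^*$-close left-generating tuple $(\Psi_i)$, and the decisive point is the elementary inequality $\norm{T_\varphi(\Psi_i-\Phi_i)}_1\le|{\rm Supp}(\varphi)|\,\norm{\Psi_i-\Phi_i}_\infty\le|{\rm Supp}(\varphi)|\,\norm{\Psi_i-\Phi_i}_*$: truncation to the finite set ${\rm Supp}(\varphi)$ converts $C^*$-smallness into $\ell^1$-smallness, while contractivity of $T_\varphi$ keeps $\sum_i(T_\varphi\Psi_i)^* * T_\varphi\Psi_i$ close in $C^*$-norm to the invertible $\sum_i\Psi_i^* * \Psi_i$, so that Theorem \ref{subexp} applies to this finitely supported element. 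Note also that you misplace where subexponential growth enters: it is used only for the spectral invariance step, exactly as in your second paragraph; the density transfer itself needs only amenability (via the positive definite functions, which moreover preserve self-adjointness in the real-rank argument). Your sketch asks the quantitative growth hypothesis to do double duty in the transfer step, which the actual proof does not require.
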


\begin{cor}[Corollary \ref{easycor}]
    Let $G$ be a group of subexponential growth and $\mathscr C\overset{q}{\to}G$ a Fell bundle. Let $\ell^1( G\,\vert\,{\mathscr C})\subset\B$ be any continuous, dense inclusion into a Banach algebra $\B$. Then 
    $$
    {\rm tsr}(\B)\leq {\rm tsr}\big({\rm C^*}( G\,\vert\,{\mathscr C})\big).
    $$
    In particular, if ${\rm tsr}\big({\rm C^*}( G\,\vert\,{\mathscr C})\big)=1$, then ${\rm tsr}(\B)=1.$
\end{cor}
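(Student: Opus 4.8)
The plan is to derive this purely formally from the Main Theorem together with the elementary fact that the topological stable rank cannot increase when one passes to a Banach algebra into which the original algebra maps continuously with dense range. First I would apply the Main Theorem in the special case $\B = {\rm C^*}( G\,\vert\,{\mathscr C})$: an enveloping $C^*$-algebra is in particular a Banach $^*$-algebra with isometric (hence continuous) involution, and it trivially sits between $\ell^1( G\,\vert\,{\mathscr C})$ and itself, so the hypotheses are met. This yields the equality ${\rm tsr}\big({\rm C^*}( G\,\vert\,{\mathscr C})\big) = {\rm tsr}\big(\ell^1( G\,\vert\,{\mathscr C})\big)$, which converts the target into the inequality ${\rm tsr}(\B) \leq {\rm tsr}\big(\ell^1( G\,\vert\,{\mathscr C})\big)$. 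Note that this is exactly the step where the breadth of the corollary becomes visible: the hypothesis that $\B$ be a $^*$-algebra, and the upper bound $\B\subset {\rm C^*}( G\,\vert\,{\mathscr C})$, are both dropped, and in exchange we will only obtain an inequality.

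The second ingredient is the monotonicity principle: if $\phi\colon A \to B$ is a continuous homomorphism of Banach algebras with dense range, then ${\rm tsr}(B) \leq {\rm tsr}(A)$. I would either cite this from Rieffel's foundational work \cite{Ri83} or give the short argument. Passing to unitizations produces a continuous, unital $\tilde\phi\colon \tilde A \to \tilde B$ that still has dense range; any tuple in ${\rm Lg}_n(\tilde A)$ satisfies $\sum c_i a_i = 1$, and applying $\tilde\phi$ gives $\sum \tilde\phi(c_i)\tilde\phi(a_i) = 1$, so its image lies in ${\rm Lg}_n(\tilde B)$. Since ${\rm Lg}_n(\tilde A)$ is dense in $\tilde A^{\,n}$ and the componentwise map $\tilde\phi^{(n)}$ has dense range, the image is dense in $\tilde B^{\,n}$, forcing ${\rm Lg}_n(\tilde B)$ to be dense and hence ${\rm tsr}(\tilde B)\le n$. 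Applying this to the given continuous dense inclusion $\ell^1( G\,\vert\,{\mathscr C}) \subset \B$ gives ${\rm tsr}(\B) \leq {\rm tsr}\big(\ell^1( G\,\vert\,{\mathscr C})\big)$, and chaining with the equality from the first step completes the estimate. Observe that this argument never uses the $^*$-structure, which is why it applies to an arbitrary Banach algebra $\B$.

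The content here is genuinely light, since the statement is a corollary; the only point requiring care, and what I would regard as the main (minor) obstacle, is the non-unital bookkeeping, namely checking that the unitization of a continuous dense-range homomorphism is again continuous, unital, and of dense range, so that the monotonicity principle is legitimately applied to the possibly non-unital algebras at hand. Finally, the `in particular' claim is immediate: the topological stable rank of any Banach algebra is at least $1$, so the assumption ${\rm tsr}\big({\rm C^*}( G\,\vert\,{\mathscr C})\big)=1$ squeezes $1 \leq {\rm tsr}(\B) \leq 1$.
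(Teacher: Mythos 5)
Your proposal is correct and follows essentially the same route as the paper: the paper's proof combines Lemma \ref{major} (exactly your monotonicity principle, applied to the dense continuous inclusion $\ell^1( G\,\vert\,{\mathscr C})\subset\B$) with Theorem \ref{tsrpreserve} (which gives ${\rm tsr}(\ell^1( G\,\vert\,{\mathscr C}))={\rm tsr}({\rm C^*}( G\,\vert\,{\mathscr C}))$, as in your first step). Your extra care with the unitization bookkeeping is consistent with how the paper reduces to the unital case inside Theorem \ref{tsrpreserve}.
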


The class of Banach $^*$-algebras sitting between $\ell^1( G\,\vert\,{\mathscr C})$ and ${\rm C^*}( G\,\vert\,{\mathscr C})$ seems to be vast. Indeed, for twisted $C^*$-dynamical systems of the form $(G,C_0(X),\alpha,\omega)$, and taking a subset $P\subset [1,\infty]$ as input, Bardadyn and Kwa\'sniewski constructed an `$L^P$-completion' of $\ell^1_{\alpha,\omega}(G,C_0(X))$. If $P$ is symmetric with respect to $2$, this is a Banach $^*$-algebra \cite[Remark 4.13]{BaKw25}, which embeds in $C_0(X)\rtimes_{\alpha,\omega} G$ by the Riesz-Thorin interpolation theorem. 

In particular, applying the above-mentioned theorem to symmetrized twisted $L^p$-crossed products yields the following theorem. In it, we use the notations $F^p_*(G,X,\alpha,\omega)$ and $F^p(G,X,\alpha,\omega)$ for the symmetrized and regular twisted $L^p$-crossed products associated with a twisted $C^*$-dynamical system of the form $(G,C_0(X),\alpha,\omega)$.
\begin{thm}[Theorem \ref{mainsymcor}]\label{mainsym}
    Let $(G,C_0(X),\alpha,\omega)$ be a twisted $C^*$-dynamical system, where $X$ is a Hausdorff locally compact space and $G$ is a group of subexponential growth. Then 
        $$
        {\rm rr}\big(F^q_*(G,X,\alpha,\omega)\big)= {\rm rr}\big(C_0(X)\rtimes_{\alpha,\omega}G\big)\quad \text{ and }\quad {\rm tsr}\big(F^q_*(G,X,\alpha,\omega)\big)={\rm tsr}\big(C_0(X)\rtimes_{\alpha,\omega}G\big),
        $$ 
        for all $q\in[1,2]$. Furthermore, if ${\rm tsr}\big(C_0(X)\rtimes_{\alpha,\omega}G\big)=1$, then one also has
        $$
        {\rm tsr}\big(F^{p}(G,X,\alpha,\omega)\big)=1,\quad\text{ for all }p\in[1,\infty].
        $$
\end{thm}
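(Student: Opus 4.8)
The plan is to realize both the symmetrized and the regular $L^p$-crossed products as algebras attached to a single Fell bundle over $G$, and then to invoke Theorem~\ref{mainthm} together with its corollary. I would first associate to the twisted $C^*$-dynamical system $(G,C_0(X),\alpha,\omega)$ its canonical Fell bundle $\mathscr C\overset{q}{\to}G$, for which $\ell^1( G\,\vert\,\mathscr C)$ is the twisted convolution algebra $\ell^1_{\alpha,\omega}(G,C_0(X))$ and ${\rm C^*}( G\,\vert\,\mathscr C)$ is the (full) twisted crossed product. Since $G$ has subexponential growth it is amenable, so the full and reduced crossed products coincide and ${\rm C^*}( G\,\vert\,\mathscr C)=C_0(X)\rtimes_{\alpha,\omega}G$. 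This identification matters: $F^2_*(G,X,\alpha,\omega)$ is the reduced crossed product sitting at the top of the symmetrized $L^q$-tower, and it is precisely amenability that lets us match it with the enveloping $C^*$-algebra appearing in Theorem~\ref{mainthm}.

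For the first assertion, fix $q\in[1,2]$. The symmetrized crossed product $F^q_*(G,X,\alpha,\omega)$ is built from the conjugate pair $\{q,q'\}$, which is symmetric with respect to $2$, so by \cite[Remark 4.13]{BaKw25} it is a Banach $^*$-algebra with isometric, hence continuous, involution. Riesz--Thorin interpolation yields, on the dense subalgebra of cross-sections, the norm estimates $\|\cdot\|_{{\rm C^*}}\leq\|\cdot\|_{F^q_*}\leq\|\cdot\|_1$, so that the identity map induces continuous dense inclusions
$$
\ell^1( G\,\vert\,\mathscr C)\subset F^q_*(G,X,\alpha,\omega)\subset {\rm C^*}( G\,\vert\,\mathscr C).
$$
Theorem~\ref{mainthm} then applies with $\B=F^q_*(G,X,\alpha,\omega)$, giving ${\rm tsr}(F^q_*)={\rm tsr}\big(\ell^1( G\,\vert\,\mathscr C)\big)$ and ${\rm rr}(F^q_*)={\rm rr}\big(\ell^1( G\,\vert\,\mathscr C)\big)$. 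Applying the same theorem to $\B={\rm C^*}( G\,\vert\,\mathscr C)=C_0(X)\rtimes_{\alpha,\omega}G$, which is trivially sandwiched between $\ell^1( G\,\vert\,\mathscr C)$ and itself, yields the corresponding equalities for the crossed product. Chaining the two pairs of identities through their common value at $\ell^1( G\,\vert\,\mathscr C)$ produces the stated equalities of tsr and rr.

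For the second assertion I cannot use Theorem~\ref{mainthm} directly, because the regular crossed product $F^p(G,X,\alpha,\omega)$ is generally not a $^*$-algebra. Instead I would apply the corollary to Theorem~\ref{mainthm}: by construction $F^p(G,X,\alpha,\omega)$ is the completion of the cross-section algebra in an $L^p$-operator norm dominated by the $1$-norm, so $\ell^1( G\,\vert\,\mathscr C)$ includes into it continuously and densely. Under the hypothesis ${\rm tsr}(C_0(X)\rtimes_{\alpha,\omega}G)={\rm tsr}\big({\rm C^*}( G\,\vert\,\mathscr C)\big)=1$, the corollary immediately gives ${\rm tsr}\big(F^p(G,X,\alpha,\omega)\big)=1$ for every $p\in[1,\infty]$.

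The main obstacle is the second paragraph: verifying, with the correct norm ordering and the Banach-$^*$ structure, that $F^q_*$ genuinely embeds between $\ell^1( G\,\vert\,\mathscr C)$ and ${\rm C^*}( G\,\vert\,\mathscr C)$ for all $q\in[1,2]$. This amounts to matching the Bardadyn--Kwa\'sniewski $L^P$-completion construction to the Fell-bundle formalism, checking the interpolation inequalities at the level of cross-sections, and confirming that the symmetrization indeed produces a continuous involution. Once this sandwich is secured, the rank computations are formal consequences of the cited results.
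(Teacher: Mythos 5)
Your proposal is correct and follows essentially the same route as the paper: realize the twisted system as a Fell bundle (Example \ref{mainex}), use the duality pairing and Riesz--Thorin interpolation to place $F^q_*(G,X,\alpha,\omega)$ between $\ell^1_{\alpha,\omega}(G,C_0(X))$ and $C_0(X)\rtimes_{\alpha,\omega}G$ as a Banach $^*$-algebra with continuous involution, then apply Theorems \ref{tsrpreserve} and \ref{rrpreserve} for the first assertion and Corollary \ref{easycor} for the regular $L^p$-crossed products. The ``main obstacle'' you flag at the end is exactly the content the paper discharges in Section \ref{main} before stating the theorem, so nothing is missing.
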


In particular, we have the following result. It fully settles Question \ref{theq}.
\begin{cor}
    Let $\alpha$ be a minimal action of $\mathbb Z$ either on the Cantor set $X=2^\mathbb N$ or on $X=\mathbb T$ by irrational rotations. Then $F^p(\mathbb Z,X,\alpha)$ has topological stable rank $1$, for all $p\in[1,\infty]$.
\end{cor}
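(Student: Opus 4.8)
The plan is to derive this corollary directly from Theorem \ref{mainsym}, so the whole task reduces to checking that its hypotheses are met and then quoting the value of the topological stable rank of the underlying $C^*$-crossed product. First I would note that $\mathbb Z$ has polynomial, hence subexponential, growth, and that both systems are untwisted, so we may take the cocycle $\omega$ to be trivial and are genuinely in the situation of Theorem \ref{mainsym}. The ``Furthermore'' clause of that theorem then guarantees ${\rm tsr}\big(F^p(\mathbb Z,X,\alpha)\big)=1$ for every $p\in[1,\infty]$ as soon as we know that ${\rm tsr}\big(C(X)\rtimes_{\alpha}\mathbb Z\big)=1$. Thus the entire problem collapses to a single $C^*$-algebraic input in each of the two cases.

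The second step is to supply that input. For $X=\mathbb T$ and $\alpha$ an irrational rotation, $C(\mathbb T)\rtimes_{\alpha}\mathbb Z$ is the irrational rotation algebra, which is known to have topological stable rank $1$ (this goes back to Rieffel's original work on stable rank, where the noncommutative tori are among the motivating examples). For $X=2^{\mathbb N}$ the Cantor set with $\alpha$ minimal, $C(X)\rtimes_{\alpha}\mathbb Z$ is the $C^*$-algebra of a minimal Cantor system, and the work of Putnam on minimal homeomorphisms of the Cantor set shows that such crossed products have real rank zero and stable rank one. Both cases may alternatively be read off from the general fact that the crossed product of a minimal $\mathbb Z$-action on a compact metrizable space of finite covering dimension has stable rank one.

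Having established ${\rm tsr}\big(C(X)\rtimes_{\alpha}\mathbb Z\big)=1$ in each case, I would simply feed this into the ``Furthermore'' part of Theorem \ref{mainsym} to conclude that ${\rm tsr}\big(F^p(\mathbb Z,X,\alpha)\big)=1$ for all $p\in[1,\infty]$, which is the assertion.

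I do not expect a genuine obstacle here: all the substantive content sits in Theorem \ref{mainsym} together with the two cited $C^*$-computations, and the corollary is just the assembly of these inputs. The only point deserving a moment's care is to confirm that the minimality assumption, which is precisely what makes the associated $C^*$-crossed products have stable rank one, coincides with the hypothesis stated in the corollary, as indeed it does.
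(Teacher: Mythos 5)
Your proposal is correct and follows exactly the route the paper intends: reduce to the ``Furthermore'' clause of Theorem \ref{mainsym} (noting $\mathbb Z$ has subexponential growth and the twist is trivial) and feed in the known facts that minimal Cantor systems and irrational rotation algebras have stable rank one, which is precisely how the paper justifies items (iii) and (iv) of Theorem \ref{list}. The only nitpick is attribution: the stable rank one of the irrational rotation algebras is due to Putnam (and Blackadar--Kumjian--R{\o}rdam), not to Rieffel's original paper, which only bounded it by $2$; this does not affect the validity of your argument.
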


In fact, thanks to the generality of our setting, many other topological ranks are computed in Theorem \ref{list}. They might be regarded as $L^p$-versions of results in \cite{Pu90b,Pu90,BlKuRo92,Ka93,CP05}. It was recently announced by Li and Niu that every crossed product $C(X)\rtimes \Z^d$, coming from a free and minimal action $\Z^d\curvearrowright X$, has topological stable rank $1$ \cite{LiNi23}. This widens the class of algebras for which our theorems are applicable and provides other examples not explicitly mentioned here.

We remark that the results mentioned in the previous paragraph involve many different approaches, some of which are very intricate, while others contain very ad-hoc techniques. Our approach is very general and manages to bypass these methods in a unified manner.

The reader should note that groups of subexponential growth are necessarily amenable; hence, we do not know what the situation is for non-amenable groups. More concretely, we do not know the topological stable rank or the real rank of $F^p_*(G)$ for non-amenable $G$. This question seems particularly interesting for free groups $G=\mathbb F_n$.

The article is organized as follows. Section \ref{prem} deals with preliminaries but also includes the proof of Theorem \ref{mainthm}. It first introduces Fell bundles and their $\ell^1$-algebras and recalls some of the tools that will be later involved in the proofs of the main results. This section is divided into two subsections. The first subsection introduces and studies the topological stable rank for these Fell bundle algebras, while the second subsection does the same for the real rank. Section \ref{main} then briefly discusses the construction of $L^p$-crossed products and symmetrized $L^p$-crossed products and explains how the previously obtained results apply to these algebras. The main theorem of this section is Theorem \ref{list}, which basically consists of a list of results that follow from Theorem \ref{mainsym} and their (already known) appropriate $C^*$-counterparts.

\section{\texorpdfstring{$\ell^1$}--algebras of Fell bundles}\label{prem}

From now on, $G$ will denote a (discrete) amenable group with unit $\e$. We will also fix a Fell bundle $\mathscr C\!=\bigsqcup_{x\in G}\mathfrak C_x$ over $G$. That is, the $\mathfrak C_x$'s are a family of Banach spaces equipped with bilinear, associative multiplication maps $\bu:\mathfrak C_x\times \mathfrak C_y\to\mathfrak C_{xy}$ and conjugate-linear, anti-multiplicative involutions $^\bu: \mathfrak C_x\to\mathfrak C_{x^{-1}}$ satisfying natural properties that turn the unit fiber $\mathfrak C_\e$ into a $C^*$-algebra, and each $\mathfrak C_x$ into a Hilbert $\mathfrak C_\e$-bimodule (see \cite{FD88}).

The associated algebra of summable cross-sections $\ell^1( G\,\vert\,\mathscr C)$ is a Banach $^*$-algebra and a completion of the space $C_{\rm c}( G\,\vert\,\mathscr C)$ of cross-sections with finite support under the norm 
$$
\norm{\Phi}_1=\sum_{x\in G}\norm{\Phi(x)}_{\mathfrak C_x}.
$$ 
The universal $C^*$-algebra completion of $\ell^1( G\,\vert\,\mathscr C)$ is denoted by ${\rm C^*}( G\,\vert\,\mathscr C)$, and the associated universal $C^*$-norm (i.e., the norm in ${\rm C^*}( G\,\vert\,\mathscr C)$) is denoted $\norm{\cdot}_*$. We recall that the product on $\ell^1( G\,\vert\,\mathscr C)$ is given by
\begin{equation*}\label{broduct}
\big(\Phi*\Psi\big)(x)=\sum_{y\in G} \Phi(y)\bu \Psi(y^{-1}x)
\end{equation*}
and the involution is given by
\begin{equation*}\label{inwol}
\Phi^*(x)=\Phi(x^{-1})^\bu\,,
\end{equation*}
in terms of the operations $\big(\bu,^\bu\big)$ on the Fell bundle. Note that the adjoint operation $\Phi\mapsto \Phi^*$ is isometric. 

We will also consider the $\ell^\infty$-norm \begin{equation*}
    \norm{\Phi}_{\infty}=\sup_{x\in G}\norm{\Phi(x)}_{\mathfrak C_x},
\end{equation*} 
primarily because $\norm{\Phi}_{\infty}\leq \norm{\Phi}_{*}$, for all $\Phi\in {\rm C^*}( G\,\vert\,\mathscr C)$ \cite[Lemma 3.4]{Fl24}.

One of the main classes of examples of Fell bundles is presented next. It is the class of Fell bundles coming from twisted $C^*$-dynamical systems.
\begin{ex}\label{mainex}
    Let $\A$ be a $C^*$-algebra. A (continuous) twisted action of $G$ on $\A$ is a pair $(\alpha,\omega)$ of maps $\alpha:G\to{\rm Aut}({\A})$ and $\omega:G\times G\to \mathcal{UM}({\A})$, such that $\alpha$ is strongly continuous, $\omega$ is jointly strictly continuous, and \begin{itemize}
        \item[(i)] $\alpha_x(\omega(y,z))\omega(x,yz)=\omega(x,y)\omega(xy,z)$,
        \item[(ii)] $\alpha_x\big(\alpha_y(a)\big)\omega(x,y)=\omega(x,y)\alpha_{xy}(a)$,
        \item[(iii)] $\omega(x,\e)=\omega(\e,y)=1, \alpha_\e={\rm id}_{{\A}}$,
    \end{itemize} for all $x,y,z\in G$ and $a\in\A$. 

The quadruple $(G,\A,\alpha,\omega)$ is called a \emph{twisted $C^*$-dynamical system}. Given such a twisted action, one usually forms the so-called \emph{twisted convolution algebra} $\ell^1_{\alpha,\omega}( G,\A)$, consisting of all summable sequences $\Phi:G\to\A$, endowed with the product 
\begin{equation*}\label{convolution}
    \Phi*\Psi(x)=\sum_{y\in G} \Phi(y)\alpha_y[\Psi(y^{-1}x)]\omega(y,y^{-1}x)
\end{equation*} 
and the involution 
\begin{equation*}\label{involution}
    \Phi^*(x)=\omega(x,x^{-1})^*\alpha_x[\Phi(x^{-1})^*],
\end{equation*} 
making $\ell^1_{\alpha,\omega}(G,\A)$ a Banach $^*$-algebra under the norm $\norm{\Phi}_{1}=\sum_{x\in G}\norm{\Phi(x)}_{\A}$. When the twist is trivial ($\omega\equiv1$), we omit any mention of it and call the resulting algebra $\ell^1_{\alpha}(G,\A)$ the \emph{convolution algebra} associated with the action $\alpha$. In this case, the triple $(G,\A,\alpha)$ is called an (untwisted) \emph{$C^*$-dynamical system}. The associated $C^*$-envelope is usually denoted $\A\rtimes_{\alpha,\omega}G$ or $\A\rtimes_{\alpha}G$ in the untwisted case.

It is well known that these algebras correspond to our algebras $\ell^1(G\,\vert\,\mathscr C)$ for very particular Fell bundles. In fact, these bundles may be easily described as $\mathscr C=\A\times G$, with the quotient map $q(a,x)=x$, constant norms $\norm{\cdot}_{\mathfrak C_x}=\norm{\cdot}_{\A}$, and operations \begin{equation*}
    (a,x)\bu(b,y)=(a\alpha_x(b)\omega(x,y),xy)\quad\textup{and}\quad (a,x)^\bu=(\alpha_{x}^{-1}(a^*)\omega(x^{-1},x)^*,x^{-1}).
\end{equation*} 
\end{ex}

In what follows, $\widetilde \B$ will denote the minimal unitization of $\B$. If $\B$ is already unital, we simply set $\widetilde \B=\B$. We note that the minimal unitization of $\ell^1( G\,\vert\,\mathscr C)$ corresponds to the $\ell^1$-algebra of the `unitization Fell bundle' $\widetilde{\mathscr C}\to G$, which is constructed from $\mathscr C$ simply by replacing $\mathfrak C_\e$ with $\widetilde{\mathfrak C_\e}$ and keeping all the other fibers intact.

In order to prove our main result, we first need to recall some notions of multipliers. In particular, the following is a well-known characterization of amenability that can be found in \cite[Theorem 2.6.8]{BrOz08}. It uses the notion of positive definite functions, so we will recall that first.

\begin{defn}
    Let $G$ be a group. A function $\varphi:G\to\CC$ is said to be positive definite if, for every finite subset $K\subset G$, the matrix
    $$
    [\varphi(x^{-1}y)]_{x,y\in K}\in \mathbb M_{|K|}(\CC)
    $$
    is positive definite.
\end{defn}

\begin{lem}\label{cpmap}
    A group $G$ is amenable if and only if there exists a net $\{\varphi_j\}_{j\in J}$ of finitely supported positive definite functions on $G$ such that $\varphi_j\to 1$ pointwise and $\varphi_j(\e)\leq 1$, for all $j\in J$.
\end{lem}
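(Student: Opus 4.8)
The plan is to prove the well-known characterization of amenability via finitely supported positive definite functions, as stated in Lemma \ref{cpmap}. I would treat the two implications separately, and I expect the forward direction (amenability implies the existence of the approximating net) to be the substantive one, while the converse is comparatively soft.

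For the forward direction, assume $G$ is amenable. The standard route is through Følner sets. Since $G$ is amenable (and discrete), there exists a Følner net $\{F_j\}_{j\in J}$ of nonempty finite subsets of $G$ satisfying $|xF_j \triangle F_j|/|F_j|\to 0$ for every $x\in G$. From each Følner set I would build the normalized indicator vector $\xi_j = |F_j|^{-1/2}\,\mathbbm 1_{F_j}\in \ell^2(G)$, which is a unit vector, and then define
\begin{equation*}
\varphi_j(x)=\ang{\lambda_x\xi_j,\xi_j}=\frac{|xF_j\cap F_j|}{|F_j|},
\end{equation*}
where $\lambda$ is the left regular representation. Each $\varphi_j$ is a coefficient of a unitary representation, hence positive definite; it is finitely supported because $\varphi_j(x)=0$ unless $xF_j\cap F_j\neq\emptyset$, i.e. unless $x\in F_jF_j^{-1}$, a finite set; it satisfies $\varphi_j(\e)=1\leq 1$; and the Følner condition gives $\varphi_j(x)\to 1$ pointwise for each fixed $x$, since $|xF_j\cap F_j|/|F_j|\to 1$. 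This produces exactly the desired net. The only delicate point here is verifying positive definiteness cleanly: for a finite $K\subset G$ and scalars $(c_x)_{x\in K}$, one computes $\sum_{x,y}\ol{c_x}c_y\,\varphi_j(x^{-1}y)=\big\|\sum_{x\in K}c_x\lambda_x\xi_j\big\|^2\geq 0$, which follows from unitarity of $\lambda$ and the identity $\ang{\lambda_{x^{-1}y}\xi_j,\xi_j}=\ang{\lambda_y\xi_j,\lambda_x\xi_j}$.

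For the converse, suppose such a net $\{\varphi_j\}$ exists. I would use the GNS-type construction: each positive definite $\varphi_j$ gives rise to a positive type function, and the finitely supported positive definite functions are precisely the matrix coefficients of finite-dimensional pieces of the regular representation; more directly, one recovers a Følner-type or invariant-mean condition by using the $\varphi_j$ to approximate the constant function $1$ by normalized coefficients of $\lambda$. The cleanest argument observes that a finitely supported positive definite function factors through the regular representation (Schur/positivity), so the hypothesis states that $1$ lies in the pointwise closure of coefficients of $\lambda$ that are finitely supported with $\varphi_j(\e)\leq 1$; this is one of the standard equivalent formulations of the Reiter/Godement property characterizing amenability, precisely as recorded in \cite[Theorem 2.6.8]{BrOz08}.

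The main obstacle, and the reason I would lean on \cite[Theorem 2.6.8]{BrOz08} rather than reprove everything, is the converse implication: turning an abstract approximating net of positive definite functions back into an invariant mean or Følner net requires either the decomposition of finitely supported positive definite functions as coefficients of $\lambda$ together with a square-root (Reiter) trick, or an appeal to the weak containment $1_G\prec\lambda$ characterization of amenability. Since this equivalence is exactly the cited theorem, the honest proof is simply to quote it; the content I would actually supply is the Følner-to-$\varphi_j$ construction above, which makes the forward direction explicit and self-contained.
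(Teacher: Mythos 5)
Your proposal is correct and matches the paper, which gives no proof of this lemma at all and simply cites \cite[Theorem 2.6.8]{BrOz08}; your explicit F\o lner construction $\varphi_j(x)=|xF_j\cap F_j|/|F_j|$ for the forward direction is the standard argument and is carried out correctly (positive definiteness as a coefficient of $\lambda$, finite support inside $F_jF_j^{-1}$, and pointwise convergence from the F\o lner condition). Deferring the converse to the cited theorem is exactly what the paper does, so there is nothing to fix.
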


Furthermore, B\'edos and Conti \cite[Example 4.4]{BeCo25} showed that a positive definite function $\varphi:G\to\CC$ induces a completely positive map  
$$
T_\varphi: {\rm C^*}( G\,\vert\,\mathscr C)\to {\rm C^*}( G\,\vert\,\mathscr C),\quad \text{ given by }\quad T_\varphi(\Phi)(x)=\varphi(x)\Phi(x).
$$
In fact, if $\varphi_j\to 1$ pointwise and $\varphi_j(\e)\leq 1$ (as in the previous lemma), then $\sup_{x\in G}|\varphi(x)|\leq 1$ \cite[Corollary 3.7]{BeCo25}. Moreover, one has $\norm{T_{\varphi_j}\Phi}_*\leq \norm{\Phi}_*$ and $T_{\varphi_j}\Phi\to \Phi$, for all $\Phi\in C_{\rm c}( G\,\vert\,\mathscr C)$ \cite[Theorem 3.11]{BeCo25}.

The other ingredient we need to introduce comes from spectral theory. Hence, as usual, we will denote the spectrum of an element $b\in\B$ by  
$$
{\rm Spec}_{\B}(b)=\{t\in\CC\mid b-t1\textup{ is not invertible in }\widetilde\B\}.
$$ 
We also recall that a group is said to have subexponential growth if, for all finite subsets $K\subset G$, one has
$$
    \lim_{n\to\infty}|K^n|^{1/n}=1.
$$ 
All nilpotent groups have subexponential growth; thus, they serve as examples. In fact, all of our concrete applications make use of nilpotent groups (see Section \ref{main}).

\begin{thm}[\cite{Fl24}]\label{subexp}
    Suppose that $G$ has subexponential growth. Then \begin{equation*}
        {\rm Spec}_{\ell^1(G\,\vert\,\mathscr C)}(\Phi)={\rm Spec}_{{\rm C^*}( G\,\vert\,\mathscr C)}(\Phi),\quad \text{ for all }\Phi\in C_{\rm c}(G\,\vert\,\mathscr C).
\end{equation*} \end{thm}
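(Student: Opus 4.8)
The plan is to prove the nontrivial inclusion and deduce equality. The continuous unital inclusion $\ell^1(G\,\vert\,\mathscr C)\subseteq {\rm C^*}(G\,\vert\,\mathscr C)$ makes invertibility in the former imply invertibility in the latter, so ${\rm Spec}_{{\rm C^*}(G\,\vert\,\mathscr C)}(\Phi)\subseteq {\rm Spec}_{\ell^1(G\,\vert\,\mathscr C)}(\Phi)$ holds automatically, and only the reverse inclusion requires work. Since the spectrum is computed in the minimal unitizations, which are the $\ell^1$- and $C^*$-algebras of the unitized Fell bundle $\widetilde{\mathscr C}$, it is enough to show: if $\Psi\in C_{\rm c}(G\,\vert\,\widetilde{\mathscr C})$ is invertible in ${\rm C^*}(G\,\vert\,\widetilde{\mathscr C})$, then it is already invertible in $\ell^1(G\,\vert\,\widetilde{\mathscr C})$. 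Applying this to $\Psi=\Phi-t$, which is a compactly supported cross-section of $\widetilde{\mathscr C}$, then yields ${\rm Spec}_{\ell^1(G\,\vert\,\mathscr C)}(\Phi)\subseteq {\rm Spec}_{{\rm C^*}(G\,\vert\,\mathscr C)}(\Phi)$. Now $\Psi^*\Psi$ and $\Psi\Psi^*$ are compactly supported and self-adjoint, and --- being positive and invertible in the $C^*$-algebra --- have $C^*$-spectrum inside some interval $[c,d]$ with $c>0$; once I know these two elements are invertible in $\ell^1(G\,\vert\,\widetilde{\mathscr C})$, the elements $(\Psi^*\Psi)^{-1}\Psi^*$ and $\Psi^*(\Psi\Psi^*)^{-1}$ furnish a left and a right inverse of $\Psi$. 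So the whole problem reduces to the invertibility in $\ell^1(G\,\vert\,\widetilde{\mathscr C})$ of a positive, compactly supported element whose $C^*$-spectrum omits $0$.

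The heart of the argument is the following spectral-radius estimate, which is the only place where subexponential growth is used. Let $g=g^*\in C_{\rm c}(G\,\vert\,\widetilde{\mathscr C})$ and let $K\subset G$ be finite, containing $\e$ and the support of $g$. Then $g^{*n}$ is supported in $K^n$, so, using the comparison $\norm{\cdot}_\infty\leq\norm{\cdot}_*$ and the identity $\norm{g^{*n}}_*=\norm{g}_*^{\,n}$ valid for the self-adjoint element $g$ (a consequence of the $C^*$-identity), I obtain
$$
\norm{g^{*n}}_1=\sum_{x\in K^n}\norm{g^{*n}(x)}_{\mathfrak C_x}\leq |K^n|\,\norm{g^{*n}}_\infty\leq |K^n|\,\norm{g^{*n}}_*=|K^n|\,\norm{g}_*^{\,n}.
$$
Taking $n$-th roots, invoking the Beurling--Gelfand formula, and using $\lim_n|K^n|^{1/n}=1$, I conclude that the $\ell^1$-spectral radius $r(g):=\lim_n\norm{g^{*n}}_1^{1/n}$ satisfies $r(g)\leq\norm{g}_*$ for every self-adjoint $g\in C_{\rm c}(G\,\vert\,\widetilde{\mathscr C})$.

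With this estimate in hand, the invertibility of a positive $h=\Psi^*\Psi$ follows by a Neumann-series (Hulanicki-type) trick that keeps everything inside $C_{\rm c}(G\,\vert\,\widetilde{\mathscr C})$. Choosing $0<\tau<2/d$, the element $u=1-\tau h$ is self-adjoint and compactly supported, and $\sigma_{{\rm C^*}}(u)=1-\tau\,\sigma_{{\rm C^*}}(h)\subseteq(-1,1)$, so $\norm{u}_*<1$. The estimate above then gives $r(u)\leq\norm{u}_*<1$, whence the Neumann series $\sum_{k\geq 0}u^{*k}$ converges in $\ell^1(G\,\vert\,\widetilde{\mathscr C})$ and produces an inverse of $1-u=\tau h$; thus $h$ is invertible in $\ell^1(G\,\vert\,\widetilde{\mathscr C})$. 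Running this for $h=\Psi^*\Psi$ and $h=\Psi\Psi^*$ completes the reduction, and hence the proof.

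I expect the only genuine difficulty to be the spectral-radius estimate, since it is where all the hypotheses are consumed: it rests on combining finiteness of supports (so that $g^{*n}$ lives on $K^n$), the norm comparison $\norm{\cdot}_\infty\leq\norm{\cdot}_*$, and subexponential growth into the single bound $r(g)\leq\norm{g}_*$. Conceptually this is a Hermitian/symmetry statement, but note that the real-spectrum behaviour of self-adjoint elements is never established directly; it is extracted indirectly through the Neumann-series trick, which is precisely why it matters that every element appearing in the argument --- namely $g$, $u$, and all of their convolution powers --- remains compactly supported.
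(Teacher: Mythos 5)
Your argument is correct. Note that the paper does not actually prove this theorem --- it is imported from \cite{Fl24} --- but the proof there is precisely the classical Hulanicki-type argument you reconstruct: the support bound $\mathrm{supp}(g^{*n})\subseteq K^n$ combined with $\norm{\cdot}_\infty\leq\norm{\cdot}_*$ and subexponential growth to get $r_{\ell^1}(g)\leq\norm{g}_*$ for self-adjoint compactly supported $g$, followed by the Neumann-series bootstrap through $\Psi^*\Psi$ and $\Psi\Psi^*$ to pass from self-adjoint to arbitrary elements. So your proposal matches the intended proof in both structure and in the single place where the growth hypothesis is consumed.
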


Before moving to the next section, we remark that all the Banach $^*$-algebras mentioned in this section have a continuous (actually, isometric) involution. In what follows, the algebras that admit a continuous involution will play a central role.

\subsection{Topological stable rank}

Let $\B$ be a unital Banach algebra and $n\geq 1$. We set
\begin{align*}
{\rm Lg}_n(\B) := \left\{ (b_1,\ldots,b_n)\in \B^n \mid \text{the left ideal generated by $b_1,\ldots,b_n$ coincides with $\B$}\right\}.
\end{align*}
The abbreviation `Lg' stands for `left generators'. If $\B$ has an involution, we also use ${\rm Lg}_n(\B)_{\rm sa} := {\rm Lg}_n(\B)\cap \B^n_{\rm sa},$ for the set of self-adjoint left generators. We now give the definition of (left) topological stable rank, as given by Rieffel in \cite[Definition 1.4]{Ri83}.

\begin{defn}
    The \emph{(left) topological stable rank} of a unital Banach algebra $\B$ is the smallest $n\in\Z^+$ with the property that ${\rm Lg}_n(\B)$ is dense in $\B^n$ under the product topology. In such a case, we write ${\rm tsr}(\B)=n$. On the other hand, we say that ${\rm tsr}(\B)=\infty$ if no such $n$ exists.
\end{defn}

Rieffel observed that we get the same notion if we decide to work with right ideals on a Banach $^*$-algebra, at least when the involution is continuous \cite[Proposition 1.6]{Ri83}. All the results we will prove here also hold for the right-sided topological stable rank, either by virtue of this observation or by analogy. In any case, let us now introduce a characterization for the tuples in ${\rm Lg}_n(\B)$ that is easier to work with. It is well known to experts, but we include it here for completeness (and to highlight the fact that it uses $C^*$-algebraic techniques). In fact, it is sometimes taken as the definition of ${\rm Lg}_n(\B)$ (cf. \cite[pgs. 16-17]{Th21}).

\begin{prop}\label{useful}
    Let $\B$ be a unital $C^*$-algebra. Then $(b_1,\ldots,b_n)\in {\rm Lg}_n(\B)$ if and only if $\sum_{i=1}^n b_i^* b_i $ is invertible.
\end{prop}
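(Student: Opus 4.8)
The plan is to rephrase membership in ${\rm Lg}_n(\B)$ as the solvability of a single linear equation and then recognize $\sum_{i=1}^n b_i^* b_i$ as a Hilbert-module inner product, so that the equivalence reduces to the elementary fact that a positive element of a $C^*$-algebra is invertible precisely when it is bounded below by a positive multiple of the unit. First I would note that, since $\B$ is unital, the left ideal generated by $b_1,\ldots,b_n$ is exactly $\B b_1+\cdots+\B b_n$; hence $(b_1,\ldots,b_n)\in {\rm Lg}_n(\B)$ if and only if $1\in \B b_1+\cdots+\B b_n$, i.e. there exist $c_1,\ldots,c_n\in\B$ with $\sum_{i=1}^n c_i b_i=1$. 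The task is then to show that this equation is solvable exactly when $a:=\sum_{i=1}^n b_i^* b_i$ is invertible.

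The converse implication is immediate and purely algebraic: if $a$ is invertible, then setting $c_i:=a^{-1}b_i^*$ gives $\sum_i c_i b_i=a^{-1}\sum_i b_i^* b_i=a^{-1}a=1$, so the tuple lies in ${\rm Lg}_n(\B)$. This already records the useful formula for the generators.

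For the forward implication, suppose $\sum_i c_i b_i=1$. I would view $b:=(b_1,\ldots,b_n)$ and $c^*:=(c_1^*,\ldots,c_n^*)$ as elements of the standard Hilbert $\B$-module $\B^n$, whose $\B$-valued inner product is $\ang{x,y}=\sum_i x_i^* y_i$; then $\ang{b,b}=a$ and $\ang{c^*,b}=\sum_i c_i b_i=1$. Applying the Cauchy--Schwarz inequality for Hilbert $C^*$-modules, $\ang{c^*,b}^*\ang{c^*,b}\le \norm{\ang{c^*,c^*}}\,\ang{b,b}$, yields $1\le \norm{\sum_i c_i c_i^*}\,a$. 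Since $a\ge 0$ and $\sum_i c_i c_i^*\neq 0$ (otherwise all $c_i=0$, contradicting $\sum_i c_i b_i=1$), this shows $a\ge \norm{\sum_i c_i c_i^*}^{-1}\,1>0$, whence $a$ is invertible.

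The only real obstacle is this forward direction, and it is resolved entirely by the Cauchy--Schwarz estimate. One could equally avoid the module language by fixing a faithful representation $\B\subset\BofH$ and, for a unit vector $\xi$, writing $1=\ang{\sum_i c_i b_i\,\xi,\xi}=\sum_i \ang{b_i\xi,c_i^*\xi}$ and applying the scalar Cauchy--Schwarz inequality twice to bound $\ang{a\xi,\xi}$ from below by a positive constant independent of $\xi$. Either route confirms that $a$ is bounded below by a positive multiple of $1$ and hence invertible, completing the proof.
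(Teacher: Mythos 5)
Your argument is correct, and the forward direction is genuinely different from the paper's. The paper argues by contradiction: it fixes a faithful representation $\pi:\B\to\BofH$, uses that a positive non-invertible operator is not bounded below to produce a unit vector $\xi$ with $\ang{\sum_i\pi(b_i^*b_i)\xi,\xi}$ small, and then derives $1=\norm{\xi}<1$ from the relation $1=\sum_i a_ib_i$. Your route is direct: the Cauchy--Schwarz inequality in the Hilbert module $\B^n$ turns the same relation $\sum_i c_ib_i=1$ into the explicit lower bound $\sum_i b_i^*b_i\ \ge\ \bigl\|\sum_i c_ic_i^*\bigr\|^{-1}1$, and invertibility follows from the spectral mapping for positive elements bounded below. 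What your version buys is a quantitative estimate (an explicit bound on $\norm{a^{-1}}$ in terms of the coefficients $c_i$) and the avoidance of both the contradiction and the choice of a faithful representation; the paper's version uses only elementary Hilbert-space estimates and no module machinery, which is presumably why it is phrased that way. Your closing remark---that one can replace the module language by a faithful representation plus two applications of scalar Cauchy--Schwarz---essentially recovers the paper's computation, just run forwards instead of by contradiction. The easy direction ($a$ invertible $\Rightarrow$ generation, via $c_i=a^{-1}b_i^*$) is the same in both.
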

\begin{proof}
    If $\sum_{i=1}^n b_i^* b_i $ is invertible, then the left ideal generated by $b_1,\ldots,b_n$ contains an invertible element, and so it coincides with $\B$. On the other hand, and for the sake of contradiction, suppose that $b_1,\ldots,b_n$ generates $\B$ as a left ideal, but that $\sum_{i=1}^n b_i^* b_i $ is not invertible.
    
    Since $b_1,\ldots,b_n$ generates $\B$, we can find $a_1,\ldots,a_n\in\B$ such that
    $$
    1=a_1b_1+\ldots +a_nb_n.
    $$
    Now let $\pi:\B\to\BofH$ be a unital faithful $^*$-representation. Then
    $$
    0\in {\rm Spec}_\B\Big(\sum_{i=1}^n b_i^* b_i\Big)={\rm Spec}_{\BofH}\Big(\sum_{i=1}^n \pi(b_i^* b_i)\Big), 
    $$
    so $S=\sum_{i=1}^n \pi(b_i^* b_i)$ is a positive non-invertible operator on $\mathcal H$. Since $S$ is not bounded below, there exists a unit vector $\xi\in\mathcal H$ such that
    $$
    \langle S\xi,\xi\rangle <\frac{1}{M^2n^2}, \quad\text{ where }M=\max_{1\leq i\leq n}\norm{\pi(a_i)}_{\BofH}.
    $$
    But then, for every $1\leq i\leq n$, we have
    \begin{equation}\label{Delta}
        \norm{\pi(b_i)\xi}_{\mathcal H}^2=\langle \pi(b_i^*b_i)\xi,\xi\rangle<\frac{1}{M^2n^2}
    \end{equation}
    and 
    $$
    1=\norm{\xi}_{\mathcal H}=\Big\|\sum_{i=1}^n \pi(a_ib_i)\xi\Big\|_{\mathcal H}\leq \sum_{i=1}^n \|\pi(a_i)\|_{\BofH}\|\pi(b_i)\xi\|_{\mathcal H}\leq M \sum_{i=1}^n \|\pi(b_i)\xi\|_{\mathcal H}\overset{\eqref{Delta}}{<}1,
    $$
    which is indeed a contradiction.
\end{proof}

\begin{lem}\label{major}
    Let $\A\subset \B$ be a dense, continuous unital inclusion of Banach algebras. Then ${\rm tsr}(\B)\leq {\rm tsr}(\A).$
\end{lem}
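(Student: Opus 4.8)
The plan is to prove the inequality directly by transporting left generators across the inclusion. If ${\rm tsr}(\A)=\infty$ there is nothing to show, so I would assume ${\rm tsr}(\A)=n<\infty$ and aim to prove that ${\rm Lg}_n(\B)$ is dense in $\B^n$, which forces ${\rm tsr}(\B)\leq n$. Writing $\iota\colon\A\to\B$ for the inclusion, the whole argument rests on two facts: that $\iota$ sends left generators of $\A$ to left generators of $\B$, i.e.\ $\iota\big({\rm Lg}_n(\A)\big)\subseteq{\rm Lg}_n(\B)$; and that this image is already dense in $\B^n$. Together these give the claim at once.

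First I would establish the containment. Suppose $(a_1,\ldots,a_n)\in{\rm Lg}_n(\A)$. Then the left ideal they generate is all of $\A$, so in particular there exist $\alpha_1,\ldots,\alpha_n\in\A$ with $\sum_{i=1}^n\alpha_i a_i=1$. Since $\iota$ is a unital algebra homomorphism, this identity persists verbatim in $\B$, now with $\alpha_i\in\A\subseteq\B$; hence $1$ lies in the left ideal of $\B$ generated by $a_1,\ldots,a_n$, and so $(a_1,\ldots,a_n)\in{\rm Lg}_n(\B)$. This is precisely the step where unitality and multiplicativity of the inclusion are used.

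Next I would handle the density. Fix $(b_1,\ldots,b_n)\in\B^n$ and $\varepsilon>0$, and let $C\geq1$ be a continuity bound for $\iota$, so that $\norm{\iota(a)}_{\B}\leq C\norm{a}_{\A}$ for all $a\in\A$. Using that $\A$ is dense in $\B$, I would first pick $a_i'\in\A$ with $\norm{b_i-a_i'}_{\B}<\varepsilon/2$ for each $i$. Using that ${\rm Lg}_n(\A)$ is dense in $\A^n$, I would then pick $(a_1,\ldots,a_n)\in{\rm Lg}_n(\A)$ with $\norm{a_i'-a_i}_{\A}<\varepsilon/(2C)$ for each $i$, whence $\norm{a_i'-a_i}_{\B}<\varepsilon/2$ and therefore $\norm{b_i-a_i}_{\B}<\varepsilon$. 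By the first step $(a_1,\ldots,a_n)\in{\rm Lg}_n(\B)$, so arbitrarily close to any point of $\B^n$ there is a tuple of left generators of $\B$.

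This lemma is essentially routine, so I do not expect a genuine obstacle; the only point deserving care is the order of the two approximations in the density step. One must approximate in the $\B$-norm from $\A$ first, and only afterwards invoke density of ${\rm Lg}_n(\A)$ inside $\A^n$, absorbing the resulting error through the continuity constant $C$. The same reasoning applies verbatim to the right-handed version of the topological stable rank.
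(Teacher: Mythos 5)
Your proof is correct and is essentially identical to the paper's: both approximate a tuple in $\B^n$ first by elements of $\A$ (using density), then by a tuple in ${\rm Lg}_n(\A)$ (using continuity of the inclusion to control the $\B$-norm error), and observe that an identity $1=\sum_i c_i a_i$ in $\A$ persists in $\B$, so the tuple lies in ${\rm Lg}_n(\B)$. Your version merely makes the continuity constant $C$ explicit where the paper leaves it implicit.
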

\begin{proof}
    Let ${\rm tsr}(\A)=n<\infty$, let $\varepsilon>0$, and let $b_i\in\B$, for $1\leq i\leq n$. Because the inclusion of $\A$ in $\B$ is dense, we can find elements $a'_i\in\A$ and $1\leq i\leq n$ such that $\norm{a'_i-b_i}_\B<\varepsilon/2$. Now use the definition of ${\rm tsr}(\A)=n$ and the continuity of $\A\subset\B$ to choose elements $a_i,c_i\in\A$, $1\leq i\leq n$, such that $\norm{a_i-a'_i}_\B<\varepsilon/2$ and 
    $$
    1=\sum_{i=1}^n c_ia_i.
    $$
    But then the left ideal generated by $a_1,\ldots,a_n$ in $\B$ contains $1$, so $(a_1,\ldots,a_n) \in {\rm Lg}_n(\B)$. Since 
    $$
    \norm{a_i-b_i}_\B\leq \norm{a_i-a'_i}_\B+\norm{a'_i-b_i}_\B <\varepsilon,
    $$
    we see that ${\rm Lg}_n(\B)$ is dense in $\B^n$, so ${\rm tsr}(\B)\leq n$.
\end{proof}

We now prove the main theorem of the section. The underlying idea is to use the characterization of ${\rm Lg}_n(\B)$ in terms of invertibility and then appeal to Theorem \ref{subexp} to ensure that the corresponding inverses remain inside $\ell^1( G\,\vert\,{\mathscr C})$. Lemma \ref{cpmap} then provides us with a way of upgrading the convergence in ${\rm C^*}( G\,\vert\,{\mathscr C})$ to convergence in $\ell^1( G\,\vert\,{\mathscr C})$. 

\begin{thm}\label{tsrpreserve}
    Suppose that $G$ has subexponential growth. Let $\B$ be a Banach algebra such that $\ell^1( G\,\vert\,{\mathscr C})\subset\B\subset {\rm C^*}( G\,\vert\,{\mathscr C})$. Then ${\rm tsr}(\B)= {\rm tsr}\big(\ell^1( G\,\vert\,{\mathscr C})\big).$
\end{thm}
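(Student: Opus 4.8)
The plan is to prove the two inequalities ${\rm tsr}(\B)\leq {\rm tsr}\big(\ell^1( G\,\vert\,{\mathscr C})\big)$ and ${\rm tsr}\big(\ell^1( G\,\vert\,{\mathscr C})\big)\leq{\rm tsr}(\B)$ separately. The first is essentially immediate: since $\ell^1( G\,\vert\,{\mathscr C})\subset\B$ is a dense, continuous unital inclusion (density holds because $\B$ sits below the $C^*$-envelope, which is the closure of $\ell^1$), Lemma \ref{major} gives ${\rm tsr}(\B)\leq{\rm tsr}\big(\ell^1( G\,\vert\,{\mathscr C})\big)$ directly. So the entire content of the theorem is the reverse inequality, and I would spend all my effort there.

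For the reverse inequality, suppose ${\rm tsr}(\B)=n<\infty$ (if it is infinite there is nothing to prove). I want to show ${\rm Lg}_n\big(\ell^1( G\,\vert\,{\mathscr C})\big)$ is dense, so fix $\Phi_1,\dots,\Phi_n\in\ell^1( G\,\vert\,{\mathscr C})$ and $\varepsilon>0$; by a standard truncation I may assume each $\Phi_i\in C_{\rm c}( G\,\vert\,{\mathscr C})$. Because ${\rm Lg}_n(\B)$ is dense in $\B^n$ and $\ell^1\subset\B$ continuously, I can first approximate and find a tuple in ${\rm Lg}_n(\B)$ close to $(\Phi_1,\dots,\Phi_n)$ in the $\B$-norm; the difficulty is that this tuple lives in $\B$, not in $\ell^1$, and membership in ${\rm Lg}_n$ is an open condition only in the ambient norm. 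The key structural idea, flagged in the paragraph preceding the theorem, is to pass through the $C^*$-envelope and use Proposition \ref{useful}: a tuple $(b_1,\dots,b_n)$ lies in ${\rm Lg}_n\big({\rm C^*}( G\,\vert\,{\mathscr C})\big)$ exactly when $\sum_i b_i^*b_i$ is invertible in the $C^*$-algebra. So I would transport the approximating tuple into ${\rm C^*}( G\,\vert\,{\mathscr C})$, where invertibility of $\sum_i b_i^*b_i$ is available, and then work to pull the relevant inverse back down into $\ell^1$.

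The crux is the following softening-plus-spectral-permanence argument. Given a tuple $(\Psi_1,\dots,\Psi_n)$ of compactly supported sections with $S=\sum_i\Psi_i^*\Psi_i$ invertible in ${\rm C^*}( G\,\vert\,{\mathscr C})$ and close to $(\Phi_1,\dots,\Phi_n)$, I apply the completely positive contractions $T_{\varphi_j}$ from Lemma \ref{cpmap} and B\'edos--Conti. Since $\sup_x|\varphi_j(x)|\leq1$ and $T_{\varphi_j}\Psi_i\to\Psi_i$ in $\|\cdot\|_*$ with $T_{\varphi_j}\Psi_i\in C_{\rm c}( G\,\vert\,{\mathscr C})$, for $j$ large the compactly supported tuple $(T_{\varphi_j}\Psi_1,\dots,T_{\varphi_j}\Psi_n)$ is $\|\cdot\|_*$-close to $(\Psi_1,\dots,\Psi_n)$, hence close to $(\Phi_1,\dots,\Phi_n)$; more importantly, $\sum_i (T_{\varphi_j}\Psi_i)^*(T_{\varphi_j}\Psi_i)$ stays close to $S$ and is therefore still invertible in ${\rm C^*}( G\,\vert\,{\mathscr C})$ for large $j$, because invertibility is open. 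Now the decisive point: this new tuple consists of elements of $C_{\rm c}( G\,\vert\,{\mathscr C})$, so the element $\sum_i (T_{\varphi_j}\Psi_i)^*(T_{\varphi_j}\Psi_i)$ is itself in $C_{\rm c}( G\,\vert\,{\mathscr C})$, and by Theorem \ref{subexp} its spectrum in $\ell^1( G\,\vert\,{\mathscr C})$ coincides with its spectrum in the $C^*$-envelope. Hence it is invertible already in $\ell^1( G\,\vert\,{\mathscr C})$, which forces the tuple into ${\rm Lg}_n\big(\ell^1( G\,\vert\,{\mathscr C})\big)$ (the left ideal it generates contains an invertible element). Finally I must check the approximation is in the $\ell^1$-norm rather than merely the $C^*$-norm: this is where I would truncate the supports at the very start so that everything in sight is compactly supported, and note that $T_{\varphi_j}$ acts by pointwise multiplication by a function tending to $1$ and supported on a fixed finite set once the supports are controlled, so $T_{\varphi_j}\Psi_i\to\Psi_i$ in $\|\cdot\|_1$ as well.

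The main obstacle is precisely this last mismatch of topologies: the cp maps $T_{\varphi_j}$ come with contraction and convergence guarantees in the $C^*$-norm $\|\cdot\|_*$, but the density statement I must prove is about the $\ell^1$-norm. I expect the cleanest resolution is to reduce at the outset to finitely supported sections, so that the whole argument takes place among elements of $C_{\rm c}( G\,\vert\,{\mathscr C})$ living in a fixed finite collection of fibers, on which the pointwise multipliers $T_{\varphi_j}$ converge uniformly and hence in every reasonable norm including $\|\cdot\|_1$. The interplay between keeping $\sum_i\Psi_i^*\Psi_i$ invertible (an open, $C^*$-normed condition) while simultaneously controlling the $\ell^1$-distance to the target tuple is the delicate bookkeeping, but Theorem \ref{subexp} is exactly the tool that converts $C^*$-invertibility of a compactly supported element back into $\ell^1$-invertibility, closing the loop.
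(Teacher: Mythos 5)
Your proposal follows essentially the same route as the paper's proof: Lemma \ref{major} for the easy inequality, then Proposition \ref{useful}, the B\'edos--Conti multipliers $T_\varphi$ to push the generators into $C_{\rm c}(G\,\vert\,\mathscr C)$ while preserving invertibility of $\sum_i \Psi_i^**\Psi_i$, and Theorem \ref{subexp} to pull that invertibility back into $\ell^1(G\,\vert\,\mathscr C)$. The one point your bookkeeping leaves open --- converting $C^*$-closeness of the generators $\Psi_i$ to $\Phi_i$ into $\ell^1$-closeness of $T_\varphi\Psi_i$ to $\Phi_i$ when the $\Psi_i$ are not compactly supported --- is resolved in the paper by fixing the finitely supported positive definite function $\varphi$ \emph{first} and only then choosing $\Psi_i$ with $\norm{\Psi_i-\Phi_i}_*<\varepsilon/(4|{\rm Supp}(\varphi)|)$, so that $\norm{T_\varphi\Psi_i-T_\varphi\Phi_i}_1\le|{\rm Supp}(\varphi)|\,\norm{\Psi_i-\Phi_i}_\infty\le|{\rm Supp}(\varphi)|\,\norm{\Psi_i-\Phi_i}_*$ closes the estimate, whereas your order of quantifiers (choose $\Psi_i$, then take $j$ large) leaves the factor $|{\rm Supp}(\varphi_j)|$ uncontrolled.
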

\begin{proof}
    It is enough to prove the unital case, as passing to the unitization still gives a Fell bundle algebra that satisfies $\ell^1( G\,\vert\,\widetilde{\mathscr C})\subset\widetilde\B\subset {\rm C^*}( G\,\vert\,\widetilde{\mathscr C})$. Applying Lemma \ref{major}, we have 
    $$
    {\rm tsr}\big({\rm C^*}( G\,\vert\,{\mathscr C})\big)\leq {\rm tsr}(\B)\leq {\rm tsr}\big(\ell^1( G\,\vert\,{\mathscr C})\big).
    $$
    So we only need to prove that ${\rm tsr}\big({\rm C^*}( G\,\vert\,{\mathscr C})\big)\geq {\rm tsr}\big(\ell^1( G\,\vert\,{\mathscr C})\big)$.
    
     Indeed, assume that ${\rm tsr}\big({\rm C^*}( G\,\vert\,{\mathscr C})\big)=n<\infty$, let $\varepsilon>0$, and let $\Phi_1,\ldots,\Phi_n\in C_{\rm c}( G\,\vert\,\mathscr C)$ with $M=\max_{1\leq i\leq n}\norm{\Phi_i}_{1}$. Appealing to Lemma \ref{cpmap} and the subsequent discussion regarding the net $(T_{{\varphi_j}})_j$, we can find a finitely supported positive definite function $\varphi:G\to\CC$ such that 
    \begin{equation}\label{ineq1}
        \norm{\Phi_i-T_\varphi\Phi_i}_{1}<\frac{\varepsilon}{2},\quad\text{ for all }1\leq i\leq n.
    \end{equation}
    Now, we use the definition of topological stable rank to find $\Psi_1,\ldots, \Psi_n\in{\rm C^*}( G\,\vert\,{\mathscr C})$ that generates ${\rm C^*}( G\,\vert\,{\mathscr C})$ as a left ideal such that
    \begin{equation}\label{ineq2}
        \norm{\Psi_i-\Phi_i}_{*}<\frac{\varepsilon}{4|{\rm Supp}(\varphi)|},\quad\text{ for all }1\leq i\leq n.
    \end{equation}
    Because of Proposition \ref{useful}, $\sum_{i=1}^n \Psi_i^**\Psi_i$ is invertible in ${\rm C^*}( G\,\vert\,\mathscr C)$. We now show that $T_\varphi \Psi_i$ approximates $\Phi_i$ in the $\ell^1$-norm.
    \begin{align*}
        \norm{T_\varphi\Psi_i-\Phi_i}_{1}&\leq \norm{T_\varphi\Psi_i-T_\varphi\Phi_i}_{1}+\norm{\Phi_i-T_\varphi\Phi_i}_{1} \\
        &\overset{\eqref{ineq1}}{\leq} \norm{\Psi_i-\Phi_i}_{\infty}|{\rm Supp}(\varphi)|+\frac{\varepsilon}{2} \\
        &\leq \norm{\Psi_i-\Phi_i}_{*}|{\rm Supp}(\varphi)|+\frac{\varepsilon}{2} \\
        &\overset{\eqref{ineq2}}{<}\frac{3}{4}\varepsilon.
    \end{align*}
    In particular, using the inequality in \eqref{ineq2} again, we see that 
    \begin{equation}\label{inter}
        \|T_\varphi\Psi_i\|_1\leq \|\Phi_i\|_{1}+\frac{3}{4}\varepsilon\quad \text{ and }\quad \|\Psi_i\|_*\leq \|\Phi_i\|_{*}+\frac{\varepsilon}{4}.
    \end{equation}
    We also note that 
    \begin{align*}
        \Big\|\sum_{i=1}^n \Psi_i^**\Psi_i-\sum_{i=1}^n (T_\varphi\Psi_i)^**T_\varphi\Psi_i\Big\|_{*}&\leq \sum_{i=1}^n\norm{\Psi_i^**\Psi_i- (T_\varphi\Psi_i)^**T_\varphi\Psi_i}_{*} \\
        &\leq \sum_{i=1}^n\norm{\Psi_i-T_\varphi\Psi_i}_{*}(\norm{\Psi_i}_{*}+\norm{T_\varphi\Psi_i}_{*}) \\
        &\overset{\eqref{inter}}{\leq} \sum_{i=1}^n\norm{\Psi_i-T_\varphi\Psi_i}_{*}(\norm{\Phi_i}_{1}+\norm{T_\varphi\Psi_i}_{*}+\frac{\varepsilon}{4}) \\
        &\overset{\eqref{inter}}{\leq} (2\max_{1\leq i\leq n}\norm{\Phi_i}_1+\varepsilon)\sum_{i=1}^n\norm{\Psi_i-T_\varphi\Psi_i}_{*}, 
    \end{align*}
    where the norms inside the sum can be estimated as
    \begin{align*}
        \norm{\Psi_i-T_\varphi\Psi_i}_{*}&\leq \norm{\Psi_i-\Phi_i}_{*} +\norm{\Phi_i-T_\varphi\Phi_i}_{*}+\norm{T_\varphi\Phi_i-T_\varphi\Psi_i}_{*} \\
        &\leq 2\norm{\Psi_i-\Phi_i}_{*} +\norm{\Phi_i-T_\varphi\Phi_i}_{1} \\
        &\overset{\eqref{ineq1}+\eqref{ineq2}}{<} \varepsilon.
    \end{align*}
    Hence, returning to the main line of computation, we see that
     \begin{align*}
         \Big\|\sum_{i=1}^n \Psi_i^**\Psi_i-\sum_{i=1}^n (T_\varphi\Psi_i)^**T_\varphi\Psi_i\Big\|_{*}\leq n\varepsilon(2M+\varepsilon).
     \end{align*}
    So, if $\varepsilon$ is small enough, $\sum_{i=1}^n (T_\varphi\Psi_i)^**T_\varphi\Psi_i$ is close to $\sum_{i=1}^n \Psi_i^**\Psi_i$, so it has to be invertible. But $\sum_{i=1}^n (T_\varphi\Psi_i)^**T_\varphi\Psi_i\in C_{\rm c}( G\,\vert\,\mathscr C)$ and, by applying Theorem \ref{subexp}, we see that it is invertible in $\ell^1( G\,\vert\,\mathscr C)$. Reasoning as before, we conclude that $T_\varphi\Psi_1,\ldots, T_\varphi\Psi_n$ generates $\ell^1( G\,\vert\,\mathscr C)$ as a left ideal. Hence, $(\Phi_1,\ldots,\Phi_n)$ belongs to the closure of ${\rm Lg}_n\big(\ell^1( G\,\vert\,\mathscr C)\big)$, and this proves that ${\rm tsr}\big(\ell^1( G\,\vert\,\mathscr C)\big)\leq n$.
\end{proof}

\begin{cor}
    Suppose that $G$ has subexponential growth. Then ${\rm tsr}(\ell^1( G\,\vert\,{\mathscr C})\big)= {\rm tsr}\big({\rm C^*}( G\,\vert\,{\mathscr C})\big).$
\end{cor}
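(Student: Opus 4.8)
This corollary is an immediate specialization of Theorem \ref{tsrpreserve}, so the plan is simply to instantiate that theorem at the top of the sandwich. Concretely, I would take $\B = {\rm C^*}( G\,\vert\,{\mathscr C})$ and verify that this choice meets the hypotheses of Theorem \ref{tsrpreserve}: the enveloping $C^*$-algebra is in particular a Banach algebra, it contains $\ell^1( G\,\vert\,{\mathscr C})$ by construction (it is, after all, a completion of it), and it is trivially contained in itself. Hence the chain $\ell^1( G\,\vert\,{\mathscr C})\subset\B\subset {\rm C^*}( G\,\vert\,{\mathscr C})$ holds with $\B={\rm C^*}( G\,\vert\,{\mathscr C})$, and Theorem \ref{tsrpreserve} delivers the asserted equality at once.

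There is no genuine obstacle to overcome here, since all the real work has already been done in the proof of Theorem \ref{tsrpreserve}. The only thing worth emphasizing is the structural role this statement plays: it isolates the extreme case of the sandwich and records that the $\ell^1$-algebra and its $C^*$-envelope share the same topological stable rank. This is precisely the form in which the result feeds into the later applications, where one computes ${\rm tsr}$ on the $C^*$-side using known machinery and then transports the value back to the various intermediate $L^p$-type completions via the general Theorem \ref{tsrpreserve}.
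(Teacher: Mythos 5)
Your proposal is correct and matches the paper's (implicit) argument: the corollary is stated without proof precisely because it is the instantiation of Theorem \ref{tsrpreserve} at $\B={\rm C^*}( G\,\vert\,{\mathscr C})$, exactly as you describe.
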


Simply specializing to the twisted $C^*$-dynamical system case (i.e. the setting of Example \ref{mainex}) allows us to write the following corollary.

\begin{cor}
    Suppose that $G$ has subexponential growth and let $(G,\A,\alpha,\omega)$ be a twisted $C^*$-dynamical system. Then ${\rm tsr}(\ell^1_{\alpha,\omega}(G,\A)\big)= {\rm tsr}\big(\A\rtimes_{\alpha,\omega}G\big).$
\end{cor}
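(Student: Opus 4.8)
The plan is to recognize this corollary as a direct specialization of the preceding corollary---the equality ${\rm tsr}\big(\ell^1( G\,\vert\,{\mathscr C})\big)= {\rm tsr}\big({\rm C^*}( G\,\vert\,{\mathscr C})\big)$ for general Fell bundles---to the subclass of bundles arising from twisted $C^*$-dynamical systems. No new analytic input is needed; the entire task reduces to making two identifications explicit.

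First I would invoke Example \ref{mainex}, which exhibits the twisted convolution algebra $\ell^1_{\alpha,\omega}(G,\A)$ as the $\ell^1$-algebra $\ell^1(G\,\vert\,\mathscr C)$ of the specific Fell bundle $\mathscr C=\A\times G$ with quotient map $q(a,x)=x$, constant fiber norms $\norm{\cdot}_{\mathfrak C_x}=\norm{\cdot}_\A$, and the product and involution recorded there. This identification is an isometric $^*$-isomorphism: the two algebras share the same underlying space of summable $\A$-valued sequences, their $1$-norms agree because every fiber norm equals $\norm{\cdot}_\A$, and a direct substitution confirms that the Fell-bundle operations $(\bu,{}^\bu)$ reproduce exactly the twisted convolution product $\Phi*\Psi(x)=\sum_{y} \Phi(y)\alpha_y[\Psi(y^{-1}x)]\omega(y,y^{-1}x)$ and the twisted involution $\Phi^*(x)=\omega(x,x^{-1})^*\alpha_x[\Phi(x^{-1})^*]$. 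Next I would note that, by definition of the universal $C^*$-completion, ${\rm C^*}(G\,\vert\,\mathscr C)$ for this bundle is precisely the full twisted crossed product $\A\rtimes_{\alpha,\omega}G$.

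With both identifications in place, the conclusion is immediate: applying the preceding corollary to $\mathscr C=\A\times G$ gives ${\rm tsr}\big(\ell^1(G\,\vert\,\mathscr C)\big)={\rm tsr}\big({\rm C^*}(G\,\vert\,\mathscr C)\big)$, which, after translating through the two isomorphisms, is exactly the asserted equality ${\rm tsr}\big(\ell^1_{\alpha,\omega}(G,\A)\big)={\rm tsr}\big(\A\rtimes_{\alpha,\omega}G\big)$. I do not anticipate any genuine obstacle here; the only point requiring a moment's care is verifying that the bundle operations of Example \ref{mainex} coincide with the twisted convolution operations, and this is a routine check already implicit in that example. All the substantive work---the use of the positive-definite multipliers $T_\varphi$ of Lemma \ref{cpmap} and the spectral-permanence input of Theorem \ref{subexp}---has already been carried out in Theorem \ref{tsrpreserve}.
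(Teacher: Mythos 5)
Your proposal is correct and matches the paper's argument exactly: the paper states that the corollary follows by "simply specializing to the twisted $C^*$-dynamical system case (i.e.\ the setting of Example \ref{mainex})" the preceding corollary ${\rm tsr}\big(\ell^1( G\,\vert\,{\mathscr C})\big)= {\rm tsr}\big({\rm C^*}( G\,\vert\,{\mathscr C})\big)$, which is precisely the identification you carry out. No gaps; your explicit verification of the isometric $^*$-isomorphism is the routine check the paper leaves implicit.
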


Finally, in the case that $\B$ lacks an involution and is only assumed to be a Banach algebra, we can achieve the desired equality of stable ranks only in the case of topological stable rank 1.

\begin{cor}\label{easycor}
    Suppose that $G$ has subexponential growth and let $\ell^1( G\,\vert\,{\mathscr C})\subset\B$ be any continuous, dense inclusion into a Banach algebra $\B$. Then 
    $$
    {\rm tsr}(\B)\leq {\rm tsr}\big({\rm C^*}( G\,\vert\,{\mathscr C})\big).
    $$
    In particular, if ${\rm tsr}\big({\rm C^*}( G\,\vert\,{\mathscr C})\big)=1$, then ${\rm tsr}(\B)=1.$
\end{cor}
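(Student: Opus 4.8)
The plan is to deduce the corollary directly from the two results already established for the topological stable rank, without re-entering the approximation machinery of Theorem \ref{tsrpreserve}. The key observation is that, although the hypothesis here is only a one-sided inclusion $\ell^1( G\,\vert\,{\mathscr C})\subset\B$ (with no upper bound $\B\subset{\rm C^*}( G\,\vert\,{\mathscr C})$), this is precisely the situation handled by Lemma \ref{major}, which needs nothing more than a dense continuous unital inclusion to produce the inequality ${\rm tsr}(\B)\leq{\rm tsr}\big(\ell^1( G\,\vert\,{\mathscr C})\big)$. Since topological stable rank is only defined for unital algebras, the first step I would take is to pass to minimal unitizations: adjoining the scalar line turns $\ell^1( G\,\vert\,{\mathscr C})\subset\B$ into a unital, still dense and still continuous inclusion $\widetilde{\ell^1( G\,\vert\,{\mathscr C})}\hookrightarrow\widetilde\B$. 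Here I would invoke the remark preceding the subsection, namely that $\widetilde{\ell^1( G\,\vert\,{\mathscr C})}=\ell^1( G\,\vert\,\widetilde{\mathscr C})$ is again a Fell-bundle $\ell^1$-algebra, so the earlier theorems continue to apply to it.

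The argument then reduces to a short chain. Lemma \ref{major}, applied to $\ell^1( G\,\vert\,\widetilde{\mathscr C})\hookrightarrow\widetilde\B$, gives
$$
{\rm tsr}(\B)={\rm tsr}(\widetilde\B)\leq{\rm tsr}\big(\ell^1( G\,\vert\,\widetilde{\mathscr C})\big)={\rm tsr}\big(\ell^1( G\,\vert\,{\mathscr C})\big),
$$
using the convention that the stable rank of a (possibly non-unital) algebra is that of its unitization. Next, applying Theorem \ref{tsrpreserve} to the trivial chain $\ell^1( G\,\vert\,{\mathscr C})\subset{\rm C^*}( G\,\vert\,{\mathscr C})\subset{\rm C^*}( G\,\vert\,{\mathscr C})$ (legitimate because $G$ has subexponential growth) yields the equality ${\rm tsr}\big(\ell^1( G\,\vert\,{\mathscr C})\big)={\rm tsr}\big({\rm C^*}( G\,\vert\,{\mathscr C})\big)$. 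Chaining the two displays gives exactly ${\rm tsr}(\B)\leq{\rm tsr}\big({\rm C^*}( G\,\vert\,{\mathscr C})\big)$.

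For the final clause I would use that the topological stable rank of any nonzero unital Banach algebra is a positive integer, so ${\rm tsr}(\B)\geq 1$ automatically (and $\B$ is nonzero, as it contains a dense copy of $\ell^1( G\,\vert\,{\mathscr C})$); hence if ${\rm tsr}\big({\rm C^*}( G\,\vert\,{\mathscr C})\big)=1$ the inequality forces ${\rm tsr}(\B)=1$. I do not expect a genuine obstacle here, as the substance lives entirely in Lemma \ref{major} and Theorem \ref{tsrpreserve}. The only points demanding care are bookkeeping: verifying that density and continuity survive the adjunction of a unit, and that $\widetilde{\ell^1( G\,\vert\,{\mathscr C})}$ is still a Fell-bundle algebra so that Theorem \ref{tsrpreserve} legitimately applies to it. The conceptual content of the corollary is simply that dropping the upper containment $\B\subset{\rm C^*}( G\,\vert\,{\mathscr C})$ degrades the equality of Theorem \ref{tsrpreserve} to a one-sided inequality, which nevertheless suffices to detect stable rank $1$ because $1$ is the least possible value.
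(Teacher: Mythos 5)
Your argument is correct and is essentially identical to the paper's: the paper likewise combines Lemma \ref{major} (applied to the dense continuous inclusion $\ell^1( G\,\vert\,{\mathscr C})\subset\B$) with the equality ${\rm tsr}\big(\ell^1( G\,\vert\,{\mathscr C})\big)={\rm tsr}\big({\rm C^*}( G\,\vert\,{\mathscr C})\big)$ from Theorem \ref{tsrpreserve}. Your extra care with the unitization bookkeeping and the observation that ${\rm tsr}\geq 1$ forces the final clause are both fine and merely make explicit what the paper leaves implicit.
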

\begin{proof}
    Combining Theorem \ref{tsrpreserve} with Lemma \ref{major} yields 
    $$
    {\rm tsr}(\B)\leq {\rm tsr}\big(\ell^1( G\,\vert\,{\mathscr C})\big)={\rm tsr}\big({\rm C^*}( G\,\vert\,{\mathscr C})\big),
    $$ 
    which proves the claim.
\end{proof}

\subsection{Real rank}

We now proceed to our study of the real rank. The reader will notice that the similarity of definitions allows our line of argumentation to remain essentially the same, provided that we take care of self-adjointness and the corresponding shift in indices.

\begin{defn}
    The \emph{real rank} of a unital Banach $^*$-algebra $\B$ is the smallest $n\in\N$ with the property that ${\rm Lg}_{n+1}(\B)_{\rm sa}$ is dense in $\B^{n+1}_{\rm sa}$ under the product topology. In such a case, we write ${\rm rr}(\B)=n$. On the other hand, we say that ${\rm rr}(\B)=\infty$ if no such $n$ exists.
\end{defn}

As in the case of the topological stable rank, we can provide a more manageable characterization, at least for $C^*$-algebras.

\begin{prop}\label{useful1}
    Let $\B$ be a unital $C^*$-algebra. Then $(b_1,\ldots,b_n)\in {\rm Lg}_n(\B)_{\rm sa}$ if and only if $\sum_{i=1}^n b_i^2 $ is invertible and $b_i=b_i^*$ for all $1\leq i\leq n$.
\end{prop}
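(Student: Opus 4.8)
The plan is to deduce this directly from Proposition \ref{useful}, exploiting the elementary fact that a self-adjoint element satisfies $b_i^* = b_i$, and hence $b_i^* b_i = b_i^2$.

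First I would unwind the definition of the self-adjoint left generators: by construction ${\rm Lg}_n(\B)_{\rm sa} = {\rm Lg}_n(\B)\cap \B_{\rm sa}^n$, so a tuple $(b_1,\ldots,b_n)$ lies in ${\rm Lg}_n(\B)_{\rm sa}$ exactly when each $b_i$ is self-adjoint and $(b_1,\ldots,b_n)\in {\rm Lg}_n(\B)$. Next I would invoke Proposition \ref{useful}, which characterizes membership in ${\rm Lg}_n(\B)$ by the invertibility of $\sum_{i=1}^n b_i^* b_i$. Under the standing self-adjointness hypothesis we have $b_i^* b_i = b_i^2$ for every $i$, so that $\sum_{i=1}^n b_i^* b_i = \sum_{i=1}^n b_i^2$. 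Substituting this identity into the characterization yields precisely the stated equivalence, with the self-adjointness of each $b_i$ recorded as part of the condition.

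I do not expect any genuine obstacle here: the statement is simply the self-adjoint specialization of Proposition \ref{useful}, and the only substantive observation is the identity $b_i^* b_i = b_i^2$ for self-adjoint $b_i$. Should one prefer a self-contained argument rather than a citation, one could repeat the two directions of the proof of Proposition \ref{useful} verbatim with $b_i^*$ replaced by $b_i$ throughout: invertibility of $\sum_{i=1}^n b_i^2$ exhibits an invertible element of the left ideal generated by the $b_i$, forcing that ideal to be all of $\B$; conversely, passing to a faithful unital $^*$-representation and running the same unit-vector estimate as in \eqref{Delta} excludes $0$ from the spectrum of $\sum_{i=1}^n b_i^2$, so this sum is invertible.
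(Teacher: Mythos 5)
Your proposal is correct and matches the paper's proof, which simply cites Proposition \ref{useful} as an immediate application; the identity $b_i^* b_i = b_i^2$ for self-adjoint $b_i$ is exactly the observation the paper leaves implicit. No further comment is needed.
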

\begin{proof}
    Immediate application of Proposition \ref{useful}.
\end{proof}

\begin{lem}\label{major1}
    Let $\A\subset \B$ be a dense, continuous unital inclusion of Banach $^*$-algebras. If $\A$ has a continuous involution, then ${\rm rr}(\B)\leq {\rm rr}(\A).$
\end{lem}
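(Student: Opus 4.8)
The plan is to follow the proof of Lemma~\ref{major} almost verbatim, inserting a symmetrization step so that every approximant produced is self-adjoint, and shifting indices from $n$ to $n+1$ as the definition of real rank demands. Since the inclusion $\A\subset\B$ is already assumed unital, no passage to a unitization is needed. I would begin by assuming ${\rm rr}(\A)=n<\infty$ (otherwise there is nothing to prove), fixing $\varepsilon>0$ and a tuple $(b_0,\ldots,b_n)\in\B_{\rm sa}^{n+1}$, and letting $C\geq 1$ be a constant with $\norm{x}_\B\leq C\norm{x}_\A$ for all $x\in\A$, which exists by continuity of the inclusion.

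First I would invoke the density of $\A$ in $\B$ to choose $a_i'\in\A$ with $\norm{a_i'-b_i}_\B$ as small as desired, and then symmetrize by setting $a_i'':=\tfrac12\big(a_i'+(a_i')^*\big)\in\A_{\rm sa}$. Since $b_i=b_i^*$, one has $a_i''-b_i=\tfrac12\big[(a_i'-b_i)+(a_i'-b_i)^*\big]$, so $\norm{a_i''-b_i}_\B\leq\tfrac12\big(\norm{a_i'-b_i}_\B+\norm{(a_i'-b_i)^*}_\B\big)$, and dominating the second summand is precisely where the continuity of the involution enters. This produces self-adjoint elements $a_i''\in\A_{\rm sa}$ that are $\B$-close to the $b_i$. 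Next I would apply ${\rm rr}(\A)=n$: as ${\rm Lg}_{n+1}(\A)_{\rm sa}$ is dense in $\A_{\rm sa}^{n+1}$, there exist self-adjoint $g_0,\ldots,g_n\in\A$ generating $\A$ as a left ideal with $\norm{g_i-a_i''}_\A$ small, hence with $\norm{g_i-a_i''}_\B\leq C\norm{g_i-a_i''}_\A$ small as well.

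To conclude, the $g_i$ are self-adjoint in $\A$, and hence in $\B$, since the involution of $\A$ is the restriction of that of $\B$. As they generate $\A$ as a left ideal, there are $c_i\in\A\subset\B$ with $1=\sum_{i=0}^n c_ig_i$; this identity persists in $\B$, so the left ideal of $\B$ generated by $g_0,\ldots,g_n$ contains the unit and therefore equals $\B$. Thus $(g_0,\ldots,g_n)\in{\rm Lg}_{n+1}(\B)_{\rm sa}$, and choosing the two approximation parameters small enough, the triangle inequality gives $\norm{g_i-b_i}_\B<\varepsilon$ for every $i$. This shows ${\rm Lg}_{n+1}(\B)_{\rm sa}$ is dense in $\B_{\rm sa}^{n+1}$, whence ${\rm rr}(\B)\leq n={\rm rr}(\A)$.

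I expect the symmetrization step to be the only genuine obstacle, because it is equivalent to showing that $\A_{\rm sa}$ is $\B$-dense in $\B_{\rm sa}$: the density of $\A$ in $\B$ supplies merely the possibly non-self-adjoint approximants $a_i'$, and passing to $\tfrac12\big(a_i'+(a_i')^*\big)$ keeps the $\B$-distance to the self-adjoint target $b_i$ under control only insofar as the involution is continuous, so that $\norm{(a_i'-b_i)^*}_\B$ is dominated by $\norm{a_i'-b_i}_\B$. This is exactly the role played by the hypothesis that the involution is continuous; everything else is a routine transcription of Lemma~\ref{major}, carrying self-adjointness along and replacing $n$ by $n+1$ throughout.
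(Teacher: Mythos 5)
Your proof is correct and is essentially the paper's own argument: the same symmetrization $a_i'\mapsto\tfrac12\big(a_i'+(a_i')^*\big)$ to land in $\A_{\rm sa}$, the same shift from $n$ to $n+1$, and the same observation that a generating identity $1=\sum_{i=0}^n c_ia_i$ obtained in $\A$ persists in $\B$. One remark: the bound $\norm{(a_i'-b_i)^*}_\B\leq C\,\norm{a_i'-b_i}_\B$ that you rightly isolate as the crux is really a statement about the involution of $\B$ (since $a_i'-b_i$ lies in $\B$, not in $\A$), whereas the lemma's hypothesis is phrased for $\A$; the paper's own proof elides the same point, so this is not a deviation from its approach, but in the intended applications it is the continuity of the involution on the larger algebra that makes this step work.
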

\begin{proof}
    The proof is the same as the proof of Lemma \ref{major}, but we include it here. Let ${\rm rr}(\A)=n<\infty$, let $\varepsilon>0$, and let $b_i\in\B_{\rm sa}$ for $0\leq i\leq n$. 

    Because the inclusion of $\A$ in $\B$ is dense, we can find self-adjoint elements $a'_i\in\A$, $0\leq i\leq n$, such that $\norm{a'_i-b_i}_\B<\varepsilon/2$ (if needed, replace $a'_i$ with $\tfrac{1}{2}a'_i+\tfrac{1}{2}(a'_i)^*$). Now, we proceed as before and use the definition of ${\rm rr}(\A)=n$ and the continuity of $\A\subset\B$ to choose elements $a_i,c_i\in\A_{\rm sa}$, $0\leq i\leq n$, such that $\norm{a_i-a'_i}_\B<\varepsilon/2$ and 
    $$
    1=\sum_{i=0}^n c_ia_i.
    $$
    But then the left ideal generated by $a_0,\ldots,a_n$ coincides with $\B$, so $(a_0,\ldots,a_n) \in {\rm Lg}_{n+1}(\B)_{\rm sa}$. Since 
    $$
    \norm{a_i-b_i}_\B\leq \norm{a_i-a'_i}_\B+\norm{a'_i-b_i}_\B <\varepsilon,
    $$
    we get that ${\rm Lg}_{n+1}(\B)_{\rm sa}$ is dense in $\B^{n+1}$.
\end{proof}

We now prove the real rank analog of Theorem \ref{tsrpreserve}. The proof follows the same lines.

\begin{thm}\label{rrpreserve}
    Suppose that $G$ has subexponential growth. Let $\B$ be a Banach $^*$-algebra with a continuous involution and such that $\ell^1( G\,\vert\,{\mathscr C})\subset\B\subset {\rm C^*}( G\,\vert\,{\mathscr C})$. Then ${\rm rr}(\B)= {\rm rr}\big(\ell^1( G\,\vert\,{\mathscr C})\big).$
\end{thm}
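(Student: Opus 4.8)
The plan is to run the proof of Theorem \ref{tsrpreserve} essentially verbatim, substituting the self-adjoint invertibility criterion of Proposition \ref{useful1} for that of Proposition \ref{useful} and shifting all indices by one. First I would reduce to the unital case exactly as before, noting that $\widetilde{\B}$ again carries a continuous involution and satisfies $\ell^1(G\,\vert\,\widetilde{\mathscr C})\subset\widetilde{\B}\subset{\rm C^*}(G\,\vert\,\widetilde{\mathscr C})$. Applying Lemma \ref{major1} to the two dense continuous inclusions $\ell^1(G\,\vert\,\mathscr C)\subset\B$ and $\B\subset{\rm C^*}(G\,\vert\,\mathscr C)$ — the smaller algebra having continuous involution in each case (isometric for $\ell^1$, continuous for $\B$ by hypothesis, which is precisely where the extra assumption on $\B$ enters) — yields
$$
{\rm rr}\big({\rm C^*}(G\,\vert\,\mathscr C)\big)\leq{\rm rr}(\B)\leq{\rm rr}\big(\ell^1(G\,\vert\,\mathscr C)\big),
$$
so it suffices to prove the reverse inequality ${\rm rr}\big({\rm C^*}(G\,\vert\,\mathscr C)\big)\geq{\rm rr}\big(\ell^1(G\,\vert\,\mathscr C)\big)$.

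For this I would assume ${\rm rr}({\rm C^*}(G\,\vert\,\mathscr C))=n<\infty$, fix $\varepsilon>0$, and start from self-adjoint sections $\Phi_0,\ldots,\Phi_n\in C_{\rm c}(G\,\vert\,\mathscr C)$, setting $M=\max_{0\leq i\leq n}\norm{\Phi_i}_1$; these are dense in the self-adjoint part of $\ell^1(G\,\vert\,\mathscr C)$ since any finitely supported section can be symmetrized via $\tfrac12(\Phi+\Phi^*)$, which remains finitely supported. As in the topological-stable-rank argument I would use Lemma \ref{cpmap} to pick a finitely supported positive definite $\varphi$ with $\norm{\Phi_i-T_\varphi\Phi_i}_1<\varepsilon/2$, then invoke the definition of real rank together with Proposition \ref{useful1} to obtain self-adjoint $\Psi_0,\ldots,\Psi_n\in{\rm C^*}(G\,\vert\,\mathscr C)$ with $\sum_{i=0}^n\Psi_i^2$ invertible and $\norm{\Psi_i-\Phi_i}_*<\varepsilon/(4|{\rm Supp}(\varphi)|)$. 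The two chains of norm estimates — that $T_\varphi\Psi_i$ approximates $\Phi_i$ in $\norm{\cdot}_1$, and that $\sum(T_\varphi\Psi_i)^2$ lies within $n\varepsilon(2M+\varepsilon)$ of $\sum\Psi_i^2$ in $\norm{\cdot}_*$ — transfer verbatim from the proof of Theorem \ref{tsrpreserve}, with each $\Psi_i^**\Psi_i$ replaced by $\Psi_i^2$, which is legitimate because $\Psi_i^*=\Psi_i$. For $\varepsilon$ small enough, $\sum(T_\varphi\Psi_i)^2$ is then invertible in ${\rm C^*}(G\,\vert\,\mathscr C)$; since it lies in $C_{\rm c}(G\,\vert\,\mathscr C)$, Theorem \ref{subexp} upgrades this to invertibility in $\ell^1(G\,\vert\,\mathscr C)$.

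The one genuinely new point — and the step I expect to require the most care — is ensuring that the approximating tuple stays self-adjoint, so that it actually lands in ${\rm Lg}_{n+1}(\cdot)_{\rm sa}$ rather than merely in ${\rm Lg}_{n+1}(\cdot)$. Here I would record that $T_\varphi$ preserves self-adjointness: since $(T_\varphi\Phi)^*(x)=\overline{\varphi(x^{-1})}\,\Phi^*(x)$ and a positive definite function satisfies $\varphi(x^{-1})=\overline{\varphi(x)}$, one has $(T_\varphi\Phi)^*=T_\varphi(\Phi^*)$, so $T_\varphi$ maps self-adjoint sections to self-adjoint sections. Consequently each $T_\varphi\Psi_i$ is self-adjoint, and the invertibility of $\sum(T_\varphi\Psi_i)^2$ in $\ell^1(G\,\vert\,\mathscr C)$ places $(T_\varphi\Psi_0,\ldots,T_\varphi\Psi_n)$ in ${\rm Lg}_{n+1}(\ell^1(G\,\vert\,\mathscr C))_{\rm sa}$ by Proposition \ref{useful1}. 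This exhibits $(\Phi_0,\ldots,\Phi_n)$ as a limit of such tuples, giving ${\rm rr}(\ell^1(G\,\vert\,\mathscr C))\leq n$ and completing the proof; no further obstacle is anticipated, since everything else is a routine repetition of the topological-stable-rank computation.
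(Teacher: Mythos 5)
Your proposal is correct and follows essentially the same route as the paper: reduce to the unital case, sandwich via Lemma \ref{major1}, and rerun the Theorem \ref{tsrpreserve} argument with Proposition \ref{useful1} in place of Proposition \ref{useful}, using that $T_\varphi$ preserves self-adjointness because $\varphi(x^{-1})=\overline{\varphi(x)}$ for positive definite $\varphi$ (the paper states this last point without the computation you supply, and your explicit verification is a welcome addition). The only nitpick is that your bound $n\varepsilon(2M+\varepsilon)$ should be $(n+1)\varepsilon(2M+\varepsilon)$ since the sum has $n+1$ terms, which is immaterial to the conclusion.
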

\begin{proof}
    As before, it is enough to prove the unital case, and because of Lemma \ref{major1}, we only need to prove that ${\rm rr}\big({\rm C^*}( G\,\vert\,{\mathscr C})\big)\geq {\rm rr}\big(\ell^1( G\,\vert\,{\mathscr C})\big)$.
    
    Suppose that ${\rm rr}\big({\rm C^*}( G\,\vert\,{\mathscr C})\big)=n<\infty$. We let $\varepsilon>0$ and $\Phi_0,\ldots,\Phi_n\in C_{\rm c}( G\,\vert\,\mathscr C)_{\rm sa}$ with $M=\max_{0\leq i\leq n}\norm{\Phi_i}_{1}$. Appealing to Lemma \ref{cpmap} and the subsequent discussion regarding the net $(T_{{\varphi_j}})_j$, we can find a finitely supported positive definite function $\varphi:G\to\CC$ such that 
    \begin{equation*}\label{ineq1r}
        \norm{\Phi_i-T_\varphi\Phi_i}_{1}<\frac{\varepsilon}{2},\quad\text{ for all }0\leq i\leq n.
    \end{equation*}
    Now we use the definition of real rank to find $\Psi_0,\ldots, \Psi_n\in {\rm C^*}( G\,\vert\,{\mathscr C})_{\rm sa}$ that generate ${\rm C^*}( G\,\vert\,{\mathscr C})$ as a left ideal and such that
    \begin{equation*}\label{ineq2r}
        \norm{\Psi_i-\Phi_i}_{*}<\frac{\varepsilon}{4|{\rm Supp}(\varphi)|},\quad\text{ for all }0\leq i\leq n.
    \end{equation*}
    By Lemma \ref{useful1}, $\sum_{i=0}^n \Psi_i^2$ is invertible in ${\rm C^*}( G\,\vert\,\mathscr C)$. Furthermore, by the same arguments as in the proof of Theorem \ref{tsrpreserve}, we have
    \begin{align*}
        \norm{T_\varphi\Psi_i-\Phi_i}_{1}<\varepsilon \quad\text{ and }\quad\Big\|\sum_{i=0}^n \Psi_i^2-\sum_{i=0}^n (T_\varphi\Psi_i)^2\Big\|_{*}\leq \varepsilon(n+1)(2M+\varepsilon).
    \end{align*}
For small $\varepsilon$, $\sum_{i=0}^n (T_\varphi\Psi_i)^2$ will be close to $\sum_{i=0}^n \Psi_i^2$, so it will need to be invertible. But $\sum_{i=0}^n (T_\varphi\Psi_i)^2\in C_{\rm c}( G\,\vert\,\mathscr C)$, again by Theorem \ref{subexp}, the inverse lies in $\ell^1( G\,\vert\,\mathscr C)$. Because $\varphi$ is positive definite, this implies $(T_\varphi\Psi_0,\ldots, T_\varphi\Psi_n)\in {\rm Lg}_{n+1}\big(\ell^1( G\,\vert\,\mathscr C)\big)_{\rm sa}$, and thus ${\rm rr}\big(\ell^1( G\,\vert\,\mathscr C)\big)\leq n$.
\end{proof}

\begin{cor}
    Suppose that $G$ has subexponential growth. Then ${\rm rr}(\ell^1( G\,\vert\,{\mathscr C})\big)= {\rm rr}\big({\rm C^*}( G\,\vert\,{\mathscr C})\big).$
\end{cor}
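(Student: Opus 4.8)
The plan is to recognize that this corollary is nothing but the special case of Theorem \ref{rrpreserve} in which one takes $\B={\rm C^*}(G\,\vert\,{\mathscr C})$ itself, in complete analogy with the way the preceding corollary specializes Theorem \ref{tsrpreserve}. First I would verify that ${\rm C^*}(G\,\vert\,{\mathscr C})$ meets the standing hypotheses of Theorem \ref{rrpreserve}: being a $C^*$-algebra, it is in particular a Banach $^*$-algebra whose involution is isometric, hence continuous. The nested inclusions demanded by the theorem, $\ell^1(G\,\vert\,{\mathscr C})\subset\B\subset{\rm C^*}(G\,\vert\,{\mathscr C})$, then hold trivially for this choice of $\B$, since the left inclusion is the canonical dense embedding into the $C^*$-envelope and the right one is an equality.

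Feeding $\B={\rm C^*}(G\,\vert\,{\mathscr C})$ into Theorem \ref{rrpreserve} yields immediately
$$
{\rm rr}\big({\rm C^*}(G\,\vert\,{\mathscr C})\big)={\rm rr}\big(\ell^1(G\,\vert\,{\mathscr C})\big),
$$
which is exactly the claim. I do not expect any genuine obstacle here: all of the analytic substance---the invertibility characterization of Proposition \ref{useful1}, the smoothing completely positive maps $T_\varphi$ furnished by Lemma \ref{cpmap}, and the spectral permanence of Theorem \ref{subexp} that keeps the relevant inverses inside $\ell^1(G\,\vert\,{\mathscr C})$---has already been expended in the proof of Theorem \ref{rrpreserve}. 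The corollary is therefore a pure specialization requiring no further work.
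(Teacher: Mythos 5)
Your proposal is correct and matches the paper's (implicit) derivation exactly: the corollary is stated without proof precisely because it is the specialization of Theorem \ref{rrpreserve} to $\B={\rm C^*}(G\,\vert\,{\mathscr C})$, whose isometric involution and trivial satisfaction of the inclusion hypotheses you rightly note. No further comment is needed.
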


To finish the section, we specialize the last corollary to the twisted $C^*$-dynamical system setting.

\begin{cor}
    Suppose that $G$ has subexponential growth and let $(G,\A,\alpha,\omega)$ be a twisted $C^*$-dynamical system. Then ${\rm rr}(\ell^1_{\alpha,\omega}(G,\A)\big)= {\rm rr}\big(\A\rtimes_{\alpha,\omega}G\big).$
\end{cor}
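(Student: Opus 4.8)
The plan is to reduce the statement directly to the preceding corollary by invoking the Fell bundle dictionary of Example \ref{mainex}. First I would recall that, for the twisted $C^*$-dynamical system $(G,\A,\alpha,\omega)$, Example \ref{mainex} produces the Fell bundle $\mathscr C = \A\times G$ with quotient map $q(a,x)=x$, constant norms, and the multiplication and involution displayed there. By construction the convolution product and involution on $\ell^1(G\,\vert\,\mathscr C)$ unwind exactly to the twisted convolution product and involution on $\ell^1_{\alpha,\omega}(G,\A)$, and the two carry identical $1$-norms; hence the canonical identification of cross-sections with $\A$-valued functions is an isometric $^*$-isomorphism $\ell^1(G\,\vert\,\mathscr C)\cong\ell^1_{\alpha,\omega}(G,\A)$. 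Passing to universal $C^*$-envelopes, the same identification gives ${\rm C^*}(G\,\vert\,\mathscr C)\cong \A\rtimes_{\alpha,\omega}G$.

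The second step is to apply the real rank corollary for Fell bundle algebras, which asserts that ${\rm rr}\big(\ell^1(G\,\vert\,\mathscr C)\big)={\rm rr}\big({\rm C^*}(G\,\vert\,\mathscr C)\big)$ whenever $G$ has subexponential growth. Since the real rank is an invariant of isomorphism of Banach $^*$-algebras, transporting this equality through the two $^*$-isomorphisms of the previous paragraph yields ${\rm rr}\big(\ell^1_{\alpha,\omega}(G,\A)\big)={\rm rr}\big(\A\rtimes_{\alpha,\omega}G\big)$, as desired.

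I do not anticipate any genuine obstacle: the mathematical content is entirely carried by the preceding corollary, and the argument amounts to unwinding the definitions in Example \ref{mainex}. The only point meriting a line of care is the verification that the Fell bundle $C^*$-envelope coincides with the crossed product $\A\rtimes_{\alpha,\omega}G$ rather than some other completion; but this is precisely how the latter is defined, namely as the universal enveloping $C^*$-algebra of the twisted $\ell^1$-algebra, so the identification is automatic. I would therefore keep the written proof to a single sentence naming the correspondence of Example \ref{mainex} and citing the preceding corollary.
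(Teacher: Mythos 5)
Your proposal is correct and matches the paper exactly: the paper derives this corollary by specializing the preceding Fell-bundle corollary ${\rm rr}\big(\ell^1(G\,\vert\,\mathscr C)\big)={\rm rr}\big({\rm C^*}(G\,\vert\,\mathscr C)\big)$ to the bundle $\mathscr C=\A\times G$ of Example \ref{mainex}, which is precisely your argument. No gaps.
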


\section{Applications to symmetrized twisted \texorpdfstring{$L^p$}--crossed products}\label{main}

As mentioned before, the purpose of this section is to apply the previous results to the case of symmetrized $L^p$-crossed products. For that purpose, we will fix a twisted $C^*$-dynamical system $(G,C_0(X),\alpha,\omega)$, where $G$ is a (discrete) group of subexponential growth, $X$ a Hausdorff locally compact space, $\alpha$ an action of $G$ on $X$, and $\omega:G\times G\to \mathcal U(C_b(X))$ a $2$-cocycle. Here, we abuse the notation and call $\alpha$ both the action on $X$ and the induced action on $C_0(X)$. That is
$$
\alpha_x(f)(z)=f\big(\alpha_{x^{-1}}(z)),
$$
where $x\in G,z\in X, f\in C_0(X)$.

It is well known that if $\B$ has a contractive approximate unit and $\pi:\B\to \mathbb B(\E)$ is a non-degenerate, contractive representation on a Banach space $\E$, then $\pi$ extends uniquely to a (unital) representation $\pi:\mathcal M(\B)\to \mathbb B(\E)$ of the multiplier algebra of $\B$. This is always the case for $\B=C_0(X)$, and it is relevant since our twisting $2$-cocycles are $\mathcal M(C_0(X))=C_b(X)$-valued. Note that the representation $\pi:\mathcal M(\B)\to \mathbb B(\E)$ remains contractive \cite[Theorem 3.4]{BiDeWe24}.

\begin{defn}\label{D-CovRep}
Let $p \in [1, \infty],$ and let $(G,C_0(X),\alpha,\omega)$ be a twisted $C^*$-dynamical system as described above. Let $(Y,\mathcal B,\mu)$ be a measure space. A \emph{covariant representation} of $(G,C_0(X),\alpha,\omega)$ on $L^p (Y, \mu)$ is a pair $(v, \pi)$ consisting of a map $g \mapsto v_g$ from $G$ to the invertible isometries on $L^p (Y, \mu)$ and a non-degenerate, contractive representation $\pi : C_0(X) \to \mathbb B(L^p (Y, \mu)),$ such that
$$
v_xv_y={\pi}(\omega(x,y))v_{xy}, \qquad  v_x\pi(f)v_{x}^{-1}=\pi(\alpha_x(f)),
$$
for all $x,y \in G$ and $f \in C_0(X)$.

Given a covariant representation $(v, \pi)$ on $L^p (Y, \mu)$, we define the associated \emph{integrated
representation} $\pi\rtimes v : \ell^1_{\alpha,\omega}(G,C_0(X))\to \mathbb B(L^p (Y, \mu))$ by
$$
(\pi\rtimes v)(\Phi)=\sum_{x\in G} \pi(\Phi(x))v_x,  
$$
for all $\Phi\in \ell^1_{\alpha,\omega}(G,C_0(X))$.
\end{defn}

With these definitions at hand, we can introduce the following norm on $\ell^1_{\alpha,\omega}(G,C_0(X))$:
$$
        \|\Phi\|_{p,{\rm max}}=\sup\{\|(\pi\rtimes v)(\Phi)\|\mid (v,\pi) \text{ is a covariant representation on some space }L^p (Y, \mu)\}.
$$
The completion $F^p(G,X,\alpha,\omega):=\overline{\ell^1_{\alpha,\omega}(G,C_0(X))}^{\|\cdot\|_{p,{\rm max}}}$ is usually called the full $L^p$-crossed product associated with $(G,C_0(X),\alpha,\omega)$. It is easily seen to be a Banach algebra\footnote{The authors in \cite{BaKw25,DeFaPa25} also consider other, more general, completions of the same kind. That is, completing $\ell^1_{\alpha,\omega}(G,C_0(X))$ with respect to certain classes of representations. However, their general definitions only yield seminorms instead of norms. Here, the special feature of the $L^p$-case is the existence of regular representations, which are injective.}. In the case where the action is trivial, we set $F^p(G,\omega):=F^p(G,\{\rm pt\},1,\omega)$. If the cocycle is also trivial, then we set $F^p(G):=F^p(G,\{\rm pt\},1,1)$. These algebras are often called the twisted group $L^p$-operator algebra and the group $L^p$-operator algebra, respectively. 

These algebras have appeared in full generality in \cite{BaKw25} and in \cite{DeFaPa25}. Note, however, that the untwisted crossed product also appears in \cite{Ga21,ChGaTh24}. Twisted group algebras appear in \cite{HeOr23}, while \cite{El25} deals exclusively with group algebras. For the groupoid case, we recommend \cite{AuOr22}.

It is observed in \cite[Lemma 2.11]{BaKw25} that the formulas 
$$
\pi(f)\xi(x,t)=f(x)\xi(x,t), \quad v_y \xi(x,t)=\omega(y,y^{-1}t)(x)\xi(\alpha_{y^{-1}}(x),y^{-1}t),
$$
valid for $f\in C_0(X)$, $x,y,t\in G$, and $\xi\in \ell^p(X\times G)$, define a covariant representation of $\ell^1_{\alpha,\omega}(G,C_0(X))$ on $\ell^p(X\times G)$. Its integrated form satisfies the formula (note the special notation)
$$
\Lambda_p(\Phi)\xi(x,t)=\sum_{y\in G} \Phi(y)(x)\omega(y,y^{-1}t)(x)\xi(\alpha_{y^{-1}}(x),y^{-1}t).
$$
Using this representation, one may define the reduced twisted crossed product as the completion $\overline{\Lambda_p\big(\ell^1_{\alpha,\omega}(G,C_0(X))\big)}\subset \mathbb B(\ell^p(X\times G))$. However, in our setting (the groups are amenable), this construction is known to coincide with the full crossed product \cite[Theorem 3.5]{BaKw25}. 

In any case, the representation $\Lambda_p$ allows us to introduce a `symmetrized' version of the $L^p$-crossed product. This construction appears explicitly in \cite{BaKw25}; similar constructions also appear in \cite{AuOr22,el24b,El25,DeFaPa25}, with the only difference being the degree of generality that these authors allow. In any case, consider the norm 
$$
\norm{\Phi}_{q,*}=\max\{\|\Lambda_p(\Phi)\|,\|\Lambda_q(\Phi)\|\},
$$
where $p,q\in [1,\infty]$ are H\"older duals. We define $F^q_*(G,X,\alpha,\omega)$ to be the completion of $\ell^1_{\alpha,\omega}(G,C_0(X))$ under the norm $\norm{\cdot}_{q,*}$. As before, we define the symmetrized group algebra and the symmetrized twisted group algebra by setting $F^q_*(G,\omega):=F^q_*(G,\{\rm pt\},1,\omega)$ and $F^q_*(G):=F^q_*(G,\{\rm pt\},1,1)$.

The point is that $F^q_*(G,X,\alpha,\omega)$ is naturally a Banach $^*$-algebra that sits between $\ell^1_{\alpha,\omega}(G,C_0(X))$ and $C_0(X)\rtimes_{\alpha,\omega}G$, so it fits our setting perfectly. Indeed, the fact that this algebra has a continuous involution arises from the fact that, for all $\Phi\in \ell^1_{\alpha,\omega}(G,C_0(X)),\xi\in \ell^p(X\times G),\eta\in \ell^q(X\times G)$, one has
$$
\langle \Lambda_p(\Phi)\xi,\eta\rangle=\langle\xi,\Lambda_q(\Phi^*)\eta\rangle,
$$
where $\langle\cdot,\cdot\rangle$ denotes the duality pairing between $\ell^p(X\times G)$ and $\ell^q(X\times G)$ (cf. \cite{AuOr22}). On the other hand, the inclusions $\ell^1_{\alpha,\omega}(G,C_0(X))\subset F^q_*(G,X,\alpha,\omega)\subset C_0(X)\rtimes_{\alpha,\omega}G$ follow from complex interpolation. Indeed, note that $F^q_*(G,X,\alpha,\omega)$ acts on both $\ell^p(X\times G)$ and $\ell^q(X\times G)$, so an immediate application of the Riesz-Thorin interpolation theorem yields the existence of $\theta\in[0,1]$ such that
$$
\norm{\Lambda_2(\Phi)}\leq \norm{\Lambda_p(\Phi)}^{1-\theta}\norm{\Lambda_q(\Phi)}^\theta\leq \norm{\Phi}_{q,*}.
$$

We remit the reader to \cite[Section 3]{AuOr22} or \cite[Sections 2,3]{BaKw25} for a careful discussion of the algebra $F^q_*(G,X,\alpha,\omega)$ and the properties that we have just mentioned. The interpolation argument can be explicitly found in \cite{AuOr22} or in \cite{El25}. In \cite{BaKw25}, a different argument is presented. Note that \cite[Definition 4.12]{BaKw25} provides a more general construction that takes a set $P\subset[1,\infty]$ as input. In \cite[Remark 4.13]{BaKw25}, the authors provide a sufficient condition for this algebra to be a Banach $^*$-algebra.

In any case, Theorem \ref{tsrpreserve} and Theorem \ref{rrpreserve} apply to $F^q_*(G,X,\alpha,\omega)$, while Corollary \ref{easycor} applies to $F^{p}(G,X,\alpha,\omega)$. The following theorem is immediately derived as a consequence. 

\begin{thm}\label{mainsymcor}
    Let $(G,C_0(X),\alpha,\omega)$ be a twisted $C^*$-dynamical system, where $X$ is a Hausdorff locally compact space and $G$ is a group of subexponential growth. Then 
        $$
        {\rm rr}\big(F^q_*(G,X,\alpha,\omega)\big)= {\rm rr}\big(C_0(X)\rtimes_{\alpha,\omega}G\big)\quad \text{ and }\quad {\rm tsr}\big(F^q_*(G,X,\alpha,\omega)\big)={\rm tsr}\big(C_0(X)\rtimes_{\alpha,\omega}G\big),
        $$ 
        for all $q\in[1,2]$. Furthermore, if ${\rm tsr}\big(C_0(X)\rtimes_{\alpha,\omega}G\big)=1$, then one also has
        $$
        {\rm tsr}\big(F^{p}(G,X,\alpha,\omega)\big)=1,\quad\text{ for all }p\in[1,\infty].
        $$
\end{thm}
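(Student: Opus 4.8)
The plan is to deduce everything directly from the structural results of Section \ref{prem}, applied to the Fell bundle attached to the system $(G,C_0(X),\alpha,\omega)$. Concretely, by Example \ref{mainex} this system corresponds to the Fell bundle $\mathscr C = C_0(X)\times G$, under which $\ell^1(G\,\vert\,\mathscr C)=\ell^1_{\alpha,\omega}(G,C_0(X))$ and ${\rm C^*}(G\,\vert\,\mathscr C)=C_0(X)\rtimes_{\alpha,\omega}G$. Since $G$ has subexponential growth, the hypotheses of Theorems \ref{tsrpreserve} and \ref{rrpreserve} concerning $G$ are in place, and it only remains to check that the relevant completions are sandwiched correctly.

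For the first assertion, I would invoke the discussion immediately preceding the statement: for $q\in[1,2]$ the symmetrized crossed product $F^q_*(G,X,\alpha,\omega)$ is a Banach $^*$-algebra with continuous (indeed isometric) involution, and the Riesz--Thorin interpolation bound $\norm{\Lambda_2(\Phi)}\leq\norm{\Phi}_{q,*}$ together with the contractivity of each $\Lambda_r$ on $\ell^1$ yields the continuous, dense inclusions $\ell^1_{\alpha,\omega}(G,C_0(X))\subset F^q_*(G,X,\alpha,\omega)\subset C_0(X)\rtimes_{\alpha,\omega}G$. With this in hand, Theorem \ref{rrpreserve} gives ${\rm rr}(F^q_*(G,X,\alpha,\omega))={\rm rr}(\ell^1_{\alpha,\omega}(G,C_0(X)))$ and Theorem \ref{tsrpreserve} gives the analogous equality for ${\rm tsr}$. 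The corollaries to those theorems then identify the $\ell^1$-ranks with the $C^*$-ranks, i.e. ${\rm rr}(\ell^1_{\alpha,\omega}(G,C_0(X)))={\rm rr}(C_0(X)\rtimes_{\alpha,\omega}G)$ and likewise for ${\rm tsr}$; chaining these equalities produces the two displayed identities.

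For the second assertion I would instead apply Corollary \ref{easycor} to the regular (non-symmetrized) completion $F^p(G,X,\alpha,\omega)$. The integrated representation realizes $\ell^1_{\alpha,\omega}(G,C_0(X))$ as a dense subalgebra, and since the norm $\norm{\cdot}_{p,{\rm max}}$ is dominated by the $\ell^1$-norm this inclusion is continuous; it is genuinely injective because the regular representation $\Lambda_p$ is injective. Hence Corollary \ref{easycor} yields ${\rm tsr}(F^p(G,X,\alpha,\omega))\leq {\rm tsr}(C_0(X)\rtimes_{\alpha,\omega}G)$, which equals $1$ by hypothesis; as any nonzero Banach algebra has topological stable rank at least $1$, we conclude ${\rm tsr}(F^p(G,X,\alpha,\omega))=1$ for every $p\in[1,\infty]$.

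The proof is therefore essentially a bookkeeping exercise, and I do not anticipate a genuine obstacle once the engine of Section \ref{prem} is available. The only point that requires care—and the one I would verify first—is the chain of inclusions: that $F^q_*$ really lands inside the $C^*$-crossed product (this is where the restriction $q\in[1,2]$, equivalently placing $2$ between the H\"older pair, and the interpolation estimate are used) and that its involution is continuous, so that the $^*$-hypothesis of Theorem \ref{rrpreserve} is met; and, for the regular case, that the injectivity of $\Lambda_p$ upgrades the dense homomorphism $\ell^1_{\alpha,\omega}(G,C_0(X))\to F^p(G,X,\alpha,\omega)$ to an honest inclusion so that Corollary \ref{easycor} applies.
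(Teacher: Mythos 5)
Your proposal is correct and follows essentially the same route as the paper: the paper likewise derives the theorem immediately from Theorems \ref{tsrpreserve} and \ref{rrpreserve} applied to $F^q_*(G,X,\alpha,\omega)$ (using the interpolation argument and the continuity of the involution to verify the sandwich $\ell^1_{\alpha,\omega}(G,C_0(X))\subset F^q_*(G,X,\alpha,\omega)\subset C_0(X)\rtimes_{\alpha,\omega}G$), and from Corollary \ref{easycor} applied to $F^{p}(G,X,\alpha,\omega)$. Your additional remarks on the injectivity of $\Lambda_p$ and the domination of $\|\cdot\|_{p,\mathrm{max}}$ by the $\ell^1$-norm match the justifications the paper gives in the surrounding discussion.
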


We finish the article with the following theorem, which compiles some consequences and explicit computations derived from these results.

\begin{thm}\label{list}
    The following statements are true.
    \begin{itemize}
        \item[(i)] Let $G$ be an abelian group, whose Pontryagin dual is denoted $\widehat G$. Then, for $p\in[1,2]$,
        $$
        {\rm rr}\big(F^p_*(G)\big)={\rm dim}\big(\widehat G\big)\quad\text{ and }\quad {\rm tsr}\big(F^p_*(G)\big)=\floor{\tfrac{1}{2}{\rm dim}\big(\widehat G\big)}+1, 
        $$
        where ${\rm dim}(X)$ is the covering dimension of a compact space $X$.
        \item[(ii)] Let $G$ be a nilpotent group, and let $G^f\subset G$ denote the (normal) subgroup of torsion elements. Then, for $p\in[1,2]$, \begin{enumerate}
            \item ${\rm rr}\big(F^p_*(G)\big)=0$ if and only if $G=G^f$.
            \item ${\rm rr}\big(F^p_*(G)\big)=1$ if and only if $G/G^f\cong \Z$.
        \end{enumerate}
        \item[(iii)] Let $\alpha$ be a free, minimal action of $\Z^d$ on the Cantor set $X=2^\N$. Then 
        $$
        {\rm rr}\big(F^q_*(\Z^d,X,\alpha)\big)=0\quad \text{ and }\quad {\rm tsr}\big(F^q_*(\Z^d,X,\alpha)\big)={\rm tsr}\big(F^{p}(\Z^d,X,\alpha)\big)=1,
        $$ 
        for all $p\in[1,\infty)$ and $q\in[1,2]$.
        \item[(iv)] Let $\alpha$ be an action of $\Z$ on $\T$ by irrational rotations. Then
        $$
        {\rm rr}\big(F^q_*(\Z,\T,\alpha)\big)=0\quad \text{ and }\quad {\rm tsr}\big(F^q_*(\Z,\T,\alpha)\big)={\rm tsr}\big(F^{p}(\Z,\T,\alpha)\big)=1,
        $$ 
        for all $p\in[1,\infty)$ and $q\in[1,2]$.
        \item[(v)] Let $G$ be a group of subexponential growth and let $\alpha$ be the canonical action of $G$ on the Stone-\v{C}ech compactification $\beta G$. Then $G$ is locally finite if and only if ${\rm tsr}(F^q_*(G,\beta G,\alpha))=1$ for some (and hence any) $q\in[1,2]$.
    \end{itemize}
\end{thm}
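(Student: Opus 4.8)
The plan is to read Theorem \ref{list} as a formal consequence of Theorem \ref{mainsymcor}: in every item the relevant algebra $F^q_*(G,X,\alpha,\omega)$ (respectively the regular $F^p(G,X,\alpha,\omega)$) is attached to a group $G$ of subexponential growth, so Theorem \ref{mainsymcor} identifies its real rank and topological stable rank with those of the $C^*$-crossed product $C_0(X)\rtimes_{\alpha,\omega}G$, and transfers the property tsr $=1$ from the $C^*$-crossed product to the regular $L^p$-algebra. Thus each assertion collapses to a purely $C^*$-algebraic computation that one simply quotes. The only hypothesis to check before applying Theorem \ref{mainsymcor} is subexponential growth: abelian and nilpotent groups have polynomial growth, and $\Z^d,\Z$ are nilpotent, so this is automatic in (i)--(iv), while (v) assumes it outright.

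For (i), since $G$ is discrete abelian, Gelfand--Fourier duality gives $C^*(G)\cong C(\widehat G)$ with $\widehat G$ compact; feeding into Theorem \ref{mainsymcor} the classical identities ${\rm rr}\big(C(\widehat G)\big)={\rm dim}(\widehat G)$ of Brown--Pedersen \cite{BP91} and ${\rm tsr}\big(C(\widehat G)\big)=\floor{\tfrac{1}{2}{\rm dim}(\widehat G)}+1$ of Rieffel \cite{Ri83} yields the stated formulas. For (iv) the target is the irrational rotation algebra $A_\theta=C(\T)\rtimes_\alpha\Z$, which is an $AT$-algebra of real rank zero and stable rank one; substituting ${\rm rr}=0$ and ${\rm tsr}=1$ into Theorem \ref{mainsymcor} gives the values for $F^q_*$, and the second half of that theorem upgrades ${\rm tsr}=1$ to the regular algebras $F^p$ for all $p\in[1,\infty)$.

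For (iii) the $C^*$-input is that a free minimal action of $\Z^d$ on the Cantor set $2^\N$ yields a crossed product with ${\rm rr}=0$ and ${\rm tsr}=1$; the stable rank statement is precisely the Li--Niu result \cite{LiNi23}, while real rank zero follows from the classifiability of such Cantor minimal systems. For (ii) one invokes the determination of the real rank of $C^*(G)$ for nilpotent $G$ in terms of the torsion subgroup $G^f$ (namely ${\rm rr}\big(C^*(G)\big)=0$ iff $G=G^f$ and $=1$ iff $G/G^f\cong\Z$), drawn from the literature on real ranks of group $C^*$-algebras. For (v) one quotes the characterization that $C(\beta G)\rtimes G=\ell^\infty(G)\rtimes G$ has stable rank one exactly when $G$ is locally finite. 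In each case the equalities of Theorem \ref{mainsymcor} then carry the $C^*$-value over to $F^q_*$ verbatim.

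The main obstacle is not the reduction itself, which is immediate, but matching every item to a correct existing $C^*$-theorem and verifying its precise hypotheses---freeness and minimality in (iii)--(iv), and the structural equivalences in (ii) and (v), which rest on classifications of group $C^*$-algebras that must be cited accurately. Once those $C^*$-facts are pinned down, each statement of Theorem \ref{list} follows formally by inserting the known $C^*$-value into the identities supplied by Theorem \ref{mainsymcor}, with no further estimates required.
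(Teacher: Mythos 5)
Your proposal is correct and takes essentially the same route as the paper: reduce each item to the corresponding $C^*$-algebraic computation via Theorem \ref{mainsymcor} (and Corollary \ref{easycor} for the regular $L^p$-algebras), then quote the known $C^*$-results. The only divergence is bibliographic --- for item (iii) the paper cites \cite{CP05} for free minimal $\Z^d$-actions on the Cantor set rather than \cite{LiNi23}, and for (ii) and (v) it points specifically to \cite{Ka93} and \cite{LiLi18} --- but the argument is identical.
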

\begin{proof}
    All the affirmations follow from Theorem \ref{mainsymcor}, Corollary \ref{easycor}, and the observation that the appropriate result also holds in the corresponding $C^*$-case. Indeed, \emph{(i)} follows from \cite{Ri83,BP91}; \emph{(ii)} follows from \cite[Theorem 2]{Ka93}; \emph{(iii)} follows from \cite{CP05}; \emph{(iv)} follows from \cite{Pu90,BlKuRo92}; and \emph{(v)} follows from \cite[Corollary 1.3]{LiLi18}.
\end{proof}

\section*{Acknowledgments}

The author gratefully acknowledges support by the NSF grant DMS-2144739. He is grateful to Professors Ben Hayes, Hannes Thiel and the anonymous referee for their helpful comments. The author also thanks Iason Moutzouris, who pointed him to Proposition \ref{useful}, and Professors Jianguo Zhang and Bartosz Kwaśniewski, for pointing him to the references \cite{LiNi23} and \cite{BaKw25}, respectively.

\printbibliography

\bigskip
\bigskip
ADDRESS

\smallskip
Felipe I. Flores

Department of Mathematics, University of Virginia,

114 Kerchof Hall. 141 Cabell Dr,

Charlottesville, Virginia, United States

E-mail: hmy3tf@virginia.edu

\end{document}